\documentclass[a4paper,11pt,twoside,leqno]{article}
\usepackage{amsmath,amssymb,amsfonts,amsthm,graphicx,fancyhdr}
\usepackage[utf8]{inputenc}
\usepackage[T1]{fontenc}
\usepackage{rotating}
\usepackage{xfrac}
\usepackage{thmtools}

\usepackage{hyperref}
\hypersetup{
	colorlinks=true,
	linkcolor=blue,
	filecolor=magenta,
	urlcolor=cyan,
}

\newtheorem{teo}{Theorem}[section]
\newtheorem{lem}[teo]{Lemma}
\newtheorem{prop}[teo]{Proposition}
\newtheorem{cor}[teo]{Corollary}

\declaretheoremstyle[
  spaceabove=\topsep, spacebelow=\topsep,
  headfont=\bf,  
  notefont=\mdseries, notebraces={(}{)},
  bodyfont=\rmfamily, 
  postheadspace=1em,
  qed=$\Diamond$
]{drem}
\declaretheorem[style=drem, name=Remark, numberlike=teo]{rmk}

\newcommand{\ie}[0]{\emph{i.e.} }

\newcommand{\srl}[1]{\overline{#1}}

\DeclareFontFamily{T1}{mafra}{}
\DeclareFontShape{T1}{mafra}{m}{n}{<->s*[0.95]yswab}{} 
\DeclareFontShape{T1}{mafra}{m}{it}{<->s*[1.0]ygoth}{} 
\DeclareTextFontCommand{\textgoth}{\yfrak}

\DeclareSymbolFont{mafrak}{T1}{mafra}{m}{n}
\DeclareSymbolFontAlphabet{\mathfr}{mafrak}

\DeclareSymbolFont{mbbold}{U}{bbold}{m}{n}
\DeclareSymbolFontAlphabet{\mathbbold}{mbbold}

\newcommand{\surj}[0]{\twoheadrightarrow}

\newcommand{\rr}[0]{\ensuremath{\mathbb{R}}}
\newcommand{\zz}[0]{\ensuremath{\mathbb{Z}}}
\newcommand{\nn}[0]{\ensuremath{\mathbb{N}}}

\newcommand{\img}[0]{\mathrm{Im}\,}

\newcommand{\spt}[0]{\mathrm{supp}\,}

\newcommand{\un}[0]{\mathbbold{1}}

\newcommand{\eqtag}[0]{\addtocounter{teo}{1} \tag{\theteo}}

\begin{document}

\renewcommand{\thefootnote}{\fnsymbol{footnote}}

\renewcommand{\thefootnote}{\arabic{footnote}}

\newcommand{\ud}{\tfrac{1}{2}}
\newcommand{\ut}{\tfrac{1}{3}}
\newcommand{\uq}{\tfrac{1}{4}}

\newcommand{\lmd}{\lambda}
\newcommand{\Lmd}{\Lambda}
\newcommand{\Gm}{\Gamma}
\newcommand{\gm}{\gamma}
\newcommand{\GM}{\Gamma}

\newcommand{\bsl}\backslash
\newcommand{\acts}{\curvearrowright}
\newcommand{\donc}{\rightsquigarrow}

\newcommand{\smdd}[4]{\big( \begin{smallmatrix}#1 & #2 \\ #3 & #4\end{smallmatrix} \big)}

\newcommand{\BH}[0]{\mathsf{B}\mathcal{H}}
\newcommand{\M}[0]{\mathcal{M}}
\newcommand{\U}[0]{\mathcal{U}}
\newcommand{\G}[0]{\mathcal{G}}
\newcommand{\sgn}{\textrm{sgn}\,}

\begin{center}
\Large Harmonic projection and hypercentral extensions
\vspace*{1cm}

\centerline{\large Antoine Gournay \footnote{The author gratefully acknowledges partial support from the ERC AdG grant 101097307.}}
\end{center}

\vspace*{1cm}

\centerline{\textsc{Abstract}}

\begin{center}
\parbox{10cm}{{ \small 
The Liouville property is a strong form of amenability, but contrary to amenability, it is not well-behaved under extensions.
In this paper it is shown that, for some measures, the Liouville property is preserved by [FC-]hypercentral extensions.
To this end a projection from $\ell^\infty$ onto the space of harmonic functions is introduced.
\hspace*{.1ex} 
}}
\end{center}

\section{Introduction}\label{s-intro}

The Liouville property is said to hold for a group and some measure on that group if there are no non-constant harmonic function with respect to the measure.
One can trace back at least to Avez \cite{Avez72} the fact that the Liouville property implies amenability.
Among the four basic properties of amenable groups is that extensions of amenable groups are amenable.
However this fails for the Liouville property, even for extensions of Abelian groups.
The classical example is the case of the lamplighter group; for a more complete history, see Lyons \& Peres' paper \cite{LP} settling the remaining cases of the Kaimanovich-Vershik conjecture.

The main theorem is to show that [FC-]hypercentral extensions preserve the Liouville property.
\begin{teo}
Let $\Gm$ be a countable group.
Assume that $N\lhd \Gm$ is contained in the FC-hypercentre of $\Gm$.
Let $\nu$ be a measure on $\Gm/N$ whose support generate $\Gm/N$.
Then there are measures $\mu$ such that
$\Gm$ is $\mu$-Liouville if and only if $\Gm/N$ is $\nu$-Liouville.
\end{teo}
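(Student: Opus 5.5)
The plan is to build $\mu$ from $\nu$ and a measure on $N$, and to compare bounded harmonic functions on $\Gm$ and on $\Gm/N$ through a projection onto harmonic functions. One direction is immediate for any $\mu$ whose push-forward to $\Gm/N$ is $\nu$: if $f$ is bounded $\nu$-harmonic on $\Gm/N$, then $f\circ\pi$ is bounded $\mu$-harmonic on $\Gm$. So $\mu$-Liouville for $\Gm$ implies $\nu$-Liouville for $\Gm/N$. The real content is the converse, and the choice of $\mu$ is made for it.

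For the construction, I lift $\nu$ to $\tilde\nu$ on $\Gm$ via a section $\Gm/N\to\Gm$. I then take $\mu=\ud\tilde\nu+\ud\eta$, where $\eta$ is a symmetric measure of full support on $N$ (or a sum of small symmetric measures on the finite conjugacy classes of $N$). Laziness and full support on $N$ are what will let $N$ ``act trivially'' on harmonic functions.

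The key step is to show that every bounded $\mu$-harmonic $h$ on $\Gm$ is invariant under right translation by $N$. Such an $h$ then descends to a bounded function on $\Gm/N$, which is $\nu$-harmonic because the push-forward of $\mu$ is a lazy version of $\nu$. Lazy versions have the same harmonic functions, so $h$ is constant.

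For the $N$-invariance I use induction along the FC-hypercentral series $1=Z_0\le Z_1\le\cdots$ (transfinite, with unions at limit ordinals). Here $Z_{\alpha+1}/Z_\alpha$ is the FC-centre of $\Gm/Z_\alpha$. Suppose $h$ is already $Z_\alpha$-invariant, and take $z\in Z_{\alpha+1}\cap N$, whose conjugacy class modulo $Z_\alpha$ is finite. The argument is a Choquet--Deny style one. The function $h_z(g)=h(gz)-h(g)$ is not harmonic itself. Instead I average over the finite conjugacy class $C$ of $z$, so that $\sum_{c\in C}\big(h(gc)-h(g)\big)$ commutes with the random walk modulo $Z_\alpha$, hence is $\mu$-harmonic. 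A maximum-principle argument then shows it is zero: take a sup, iterate along the walk, and use that $\eta$ charges $C$. Iterating $z$-steps and the boundedness of $h$ then forces each $h(\cdot\, c)=h$.

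To make this robust I would use the harmonic projection $P:\ell^\infty\to$ (bounded harmonic functions), defined by a Banach limit of Cesàro averages of the Markov operator, which commutes with right translations. I would then show that $P(\rho_z f)=P f$ for $z$ in the FC-hypercentre. Limit ordinals are handled by the fact that invariance under each $Z_\alpha$ gives invariance under their union.

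I expect the main obstacle to be this inductive step for FC (not central) elements. Averaging over a conjugacy class does not commute with left convolution when the class is only finite modulo $Z_\alpha$. Controlling this needs the previously obtained $Z_\alpha$-invariance together with the specific laziness/weights in $\mu$. This is exactly why the theorem only asserts that there exist \emph{some} measures $\mu$ for which the equivalence holds.
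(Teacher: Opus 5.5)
Your route is sound and has the same skeleton as the paper: transfinite induction along the FC-hypercentral series, constancy of bounded $\mu$-harmonic functions on cosets of each layer, then descent to $\Gm/N$, where $\phi_*\mu$ is a lazy version of $\nu$. The key step, however, is genuinely different.

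The paper proves Lemma~\ref{lemprinc} analytically. When $\mu$ is a convex combination of two commuting measures, the Cesàro averages of $\mu^{*n}$ are approximately invariant under each component, so the harmonic projection $f\mapsto f*m$ sends $\BH_\mu$ into $\BH_\zeta$. The paper applies this with $\zeta$ the part of $\mu$ lying on a layer, so that part must be central or conjugation invariant. It then invokes the Liouville property of the (virtually) abelian layer for $\zeta$. This is where the extra hypotheses of Theorems~\ref{teohyper} and~\ref{teofchyper} come from: a conjugation-invariant $\mu_1$, and a single $n$ with $\spt\mu^n$ generating every $N\cap Z^\lambda$.

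You instead build a central measure $\sigma$, uniform on a finite conjugacy class, which commutes with $\mu$ without being a component of it, and then run a Choquet--Deny sup argument. That argument proves in a few lines the more general fact that $\BH_\mu\subset\BH_\sigma$ whenever $\sigma*\mu=\mu*\sigma$ and $\spt\sigma$ lies in the semigroup generated by $\spt\mu$; this already contains Lemma~\ref{lemprinc}. Working one element at a time also lets any $\mu$ charging all of $N$ work. You need no conjugation invariance, no uniform generation, and no appeal to Choquet--Deny on the layers. What the paper's route buys is the harmonic projection itself and a quantitative rate of approximate invariance.

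Several steps need to be made precise.

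First, your $\mu$ only charges $N$ and a lift of $\nu$, so induct on $N_\alpha=N\cap Z_\alpha$ rather than on $Z_\alpha$. If $h$ is $N_\alpha$-invariant, it descends to a bounded $\bar\mu$-harmonic $\bar h$ on $\Gm/N_\alpha$. For $z\in N_{\alpha+1}$, the conjugates of $z$ lie in $N$ and in finitely many $Z_\alpha$-cosets, hence in finitely many $N_\alpha$-cosets. So $\bar z$ has a genuinely finite class $\bar C$, and the obstacle you anticipate in your last paragraph disappears without any special weights.

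Second, with $\bar\sigma$ uniform on $\bar C^{-1}$, the sup argument for $D=|\bar C|(\bar h*\bar\sigma-\bar h)$ cannot telescope along powers of $z$, because $D$ involves the whole class. It telescopes along $\bar\sigma$:
$\bar h*\bar\sigma^{n}-\bar h=|\bar C|^{-1}\sum_{j<n}D*\bar\sigma^{j}$.
If $D(g_k)\to M=\sup D$, then $D(g_kw)\to M$ for every $w$ reachable by the walk. Since $\eta$ charges $\bar C$, this includes all products of elements of $\bar C$. Hence $nM\le 2|\bar C|\,\|h\|_\infty$ for all $n$, so $M\le 0$, and symmetrically $\inf D\ge 0$.

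Third, to pass from $\bar h*\bar\sigma=\bar h$ to $\bar h(\cdot\,\bar z)=\bar h$, use that $\bar\sigma$ is central. Then $\bar h(\cdot\,\bar z)-\bar h$ is $\bar\sigma$-harmonic, and you can telescope along $\bar z^{j}$.

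Finally, drop the harmonic-projection aside. The map $f\mapsto f*m$ commutes with left translations, not right ones, and you do not need it.
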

The measures $\mu$ are fairly arbitrary, especially if $N$ is finitely generated and $N$ lies in the hypercentre; see Theorems \ref{teohyper}, Remark \ref{remmuexist} and \ref{teofchyper} for details. It is required for the push-forward of $\mu$ by the quotient to be a convex combination of $\nu$ and the Dirac mass at the identity (\ie a lazier version of $\nu$).

Prior to the proof of the main theorem, an amusing tool that is introduced in the process of the proof is the harmonic projection (with respect to some measure $P$).
This is a (norm 1) projection from $\ell^\infty(\Gm)$ to the space of bounded $P$-harmonic functions.

Motivated by the large possibilities for the choice of the measures and the stability conjecture, the paper closes on some remarks on the equivalence of some measures for harmonicity.

\textit{Acknowledgments:} During the preparation of this paper J.~Brieussel pointed out to the author that the main lemma (Lemma \ref{lemprinc}) is a weak form of a very recent result of Hartman, Hru\v{s}kov\'a and Segev \cite{HHO}.

\section{Harmonic projection}

Consider a countable group $\Gm$.
The convolution product is nothing else then multiplication in the group ring $\rr[\Gm]$.
If $\delta_x$ is the function with value 1 at $x$ and 0 elsewhere, then $\delta_x * \delta_y = \delta_{xy}$.
This extends linearly for any two finitely supported function, and, by density, to any pair of function $f \in \ell^p\Gm$ and $g \in \ell^q\Gm$ (if $p,q<\infty$) where $p$ and $q$ are Hölder conjugate.

When $p=\infty$, the norm-closure of finitely supported functions is a smaller space denoted $c_0$, the pre-dual of $\ell^1$. However, if one of the functions is finitely supported, then the other function can be arbitrary. Hence density may be used to define a convolution between $\ell^1$ and $\ell^\infty$ as well.
It is possible to consider larger spaces to define the convolution product.
For example, if the measure has finite $\tfrac{1}{q}$-moment, then it suffices for the other function to have a gradient in $\ell^p$ (see \cite[Remark 3.17]{Go-metab}).

For our purposes however, it will be very useful to note that, if $\lambda$ (resp. $\rho$) denotes the left-regular (resp right-regular) representation, then, for any function $f:\Gm \to \rr$, $\delta_x*f = \lambda_x f$ and $f*\delta_x = \rho_{x^{-1}} f$.


Let $P$ be a measure $\Gm \to [0;1]$.
A function is said to be $P$-harmonic (or harmonic with respect to $P$) if $f*P = f$ (\emph{i.e.} $f*(\delta_e-P) = 0$ as a convolution; $\delta_e$ is the Dirac mass at the identity of the group).
By a slight abuse of notations, $P^0$ will denote $\delta_e$
and $P^n$ will also be used to denote $P^{*n}$, that is the convolution of $n$ copies of $P$.
It is straightforward to see that a $P$-harmonic function is also $P^n$-harmonic.

In fact there is a larger set of measures for which this holds, namely the convex combination of all the $P^n$: $\mathcal{C} = \{ \sum_{i \geq 0} a_iP^i \mid a_i \geq 0$ and $\sum a_i =1 \}$. Note that $\mathcal{C}$ is bounded, norm-closed and convex.

On the other hand, one can also look at measures which are $P$-harmonic. Now for a measure this is not possible, but it makes sense for a sequence:
a sequence of measures $\mu_n$ will be said {\bfseries approximately $P$-invariant} if $\mu_n*(\delta_e-P)$ tends to 0 in $\ell^1$-norm.

A particularly classical instance of a sequence of measures in $\mathcal{C}$ which is also $P$-invariant is
$G_n := \displaystyle \frac{1}{n} \sum_{i=1}^n P^i$.
Indeed
\[
(\delta_e-P)*G_n = G_n*(\delta_e-P) = \tfrac{1}{n} (\delta_e-P^{n+1}).
\]
Let $f \in \ell^\infty(\Gm)$. The sequence $f_n := f*G_n$ almost never converges, but one may pick an ultrafilter $\U$ and get a[n ultra]limit $\lim_\U f_n$.
By the above equality, this limit is a $P$-harmonic function (and if $f$ were to be $P$-harmonic, the limit, just like the whole sequence, is identical to $f$).

Instead of thinking in terms of ultrafilter, one can instead think of a convolution by a mean.
Denote the set of means by $\M(\Gm)$.
This is a set inside $(\ell^\infty)^*$ which consists in elements $m$ satisfying the following two properties:\\
(i) $m(\un) = 1$ where $\un \in \ell^\infty(\Gm)$ denotes the function taking value 1 everywhere\\
(ii) $m(f) \geq 0$ if $f \geq 0$ \\
It is standard to check that the set of means is convex and bounded (hence weakly compact).
Any probability measure belongs to $\M(\Gm)$, in fact the weak$^*$-closure of the [convex bounded] set of probability measures is dense in $\M(\Gm)$. See for example Greenleaf's book \cite[p.6]{Greenleaf} for more details.

Convolution by a mean can be defined by the left- (or right-)regular representation $\lambda$ (resp. $\rho$)
as well as the inversion $f \mapsto \check{f}$
(defined by $\check{f}(x) = f(x^{-1})$).
First, recall that the pairing between $f \in \ell^p(\Gm)$ and $g \in \ell^q(\Gm)$ is given by $\langle f \mid g \rangle = \sum_{\gm \in \Gm} f(\gm) g(\gm)$.
Second, note that
$(f*g)(\gm) = \sum_x f(x)g(x^{-1}\gm) = \sum_y f(\gm y^{-1})g(y)$.
Hence $(f*g)(\gm)
= \langle f \mid \lambda_\gm( \check{g}) \rangle
= \langle \rho_{\gm^{-1}}(\check{f}) \mid g \rangle$

Going back to our example of the sequences $f_n := f*G_n$, one may pick a $m \in \M(\Gm)$ which belongs to the weak$^*$-closure of $\{G_n\}_{n \in \nn}$.
Then $(f*m)(\gm) := \langle \rho_{\gm^{-1}}(\check{f}) \mid m \rangle$ is nothing else then the [ultra]limit mentioned above.

Note also that it is possible to use duality to define a random walk with a mean instead of a measure.
By iterating the convolution $*m$, one gets a ``random walk''.

The shorthand $\BH_P(\Gm)$ will be used to denote the space of bounded $P$-harmonic functions.
\begin{lem}\label{meanconv}
Let $P$ be a probability measure and $\mathcal{C}$ as above the convex set of its convolutions.
Let $\mu_n$ a sequence of probability measures and $m$ be a weak$^*$ accumulation point of this sequence.
Let $\pi: \ell^\infty(\Gm) \to \ell^\infty(\Gm)$ be defined by $\pi(f) =f*m$.
Then  $\pi$ is a linear map of norm 1 and\\
- if $\mu_n \in \mathcal{C}$, then $\pi$ restricted to $\BH_P(\Gm)$ is the identity.\\
- if $\mu_n$ is an approximately $P$-invariant sequence, then the image of $\pi$ is contained in $\BH_P(\Gm)$.
\end{lem}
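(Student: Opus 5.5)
The plan is to reduce everything to pointwise statements. By definition $(f*m)(\gm) = \langle \rho_{\gm^{-1}}(\check f) \mid m\rangle$, and $m$ is a weak$^*$ limit of the $\mu_n$ along some subnet (or ultrafilter $\U$). Since $\rho_{\gm^{-1}}(\check f)\in\ell^\infty$ for every $\gm$, we get
\[
(f*m)(\gm)=\lim_{\U}(f*\mu_n)(\gm).
\]

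\textbf{Linearity and norm.} Linearity of $\pi$ is immediate, because $m$ is a linear functional and $f\mapsto\rho_{\gm^{-1}}(\check f)$ is linear. For the norm, each $\mu_n$ is a probability measure, so $|(f*\mu_n)(\gm)|\le\|f\|_\infty$. Passing to the limit gives $\|\pi f\|_\infty\le\|f\|_\infty$. Alternatively, one can use the positivity of $m$ together with $m(\un)=1$. Applying $\pi$ to $\un$ gives $\un*m=\un$, so the norm is exactly $1$.

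\textbf{First claim ($\mu_n\in\mathcal{C}$).} Let $f\in\BH_P(\Gm)$. Then $f*P^i=f$ for all $i$. Since $f$ is bounded and $\sum_i a_i$ converges, the convolution $f*\sum_i a_iP^i$ can be computed termwise, so $f*\mu=\sum_i a_i f = f$ for every $\mu\in\mathcal{C}$. In particular $f*\mu_n=f$ for every $n$, and the pointwise limit gives $\pi f=f$.

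\textbf{Second claim (approximately $P$-invariant $\mu_n$).} We must show $(f*m)*P=f*m$. The one delicate point, and the main obstacle, is justifying that convolution with $P$ commutes with the weak$^*$ limit. I would handle it pointwise. Fix $\gm$ and write
\[
\big((f*m)*P\big)(\gm)=\sum_y (f*m)(\gm y^{-1})P(y)=\sum_y P(y)\lim_\U (f*\mu_n)(\gm y^{-1}).
\]
The terms are uniformly bounded by $\|f\|_\infty P(y)$ and $P\in\ell^1$. A limit along an ultrafilter commutes with a sum dominated in this way: truncate to a finite set of $y$ carrying mass at least $1-\varepsilon$, and the tail contributes at most $\varepsilon\|f\|_\infty$. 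Hence the expression above equals $\lim_\U (f*\mu_n*P)(\gm)$.

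Finally,
\[
|(f*\mu_n*P)(\gm)-(f*\mu_n)(\gm)|\le\|f\|_\infty\,\|\mu_n*(\delta_e-P)\|_1\to 0,
\]
by the definition of an approximately $P$-invariant sequence. Therefore $\lim_\U(f*\mu_n*P)(\gm)=\lim_\U(f*\mu_n)(\gm)=(f*m)(\gm)$. Together with the previous step this gives $(f*m)*P=f*m$, so $\pi f\in\BH_P(\Gm)$. Associativity $(f*\mu_n)*P=f*(\mu_n*P)$ is used here and holds because $\mu_n,P\in\ell^1$ and $f\in\ell^\infty$, so all the sums converge absolutely.
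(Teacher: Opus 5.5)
Your proof is correct and follows the same route as the paper. It evaluates pointwise through the pairing, bounds $|(f*\mu_n)(\gm)|\le\|f\|_\infty$, uses $f*\mu_n=f$ for $\mu_n\in\mathcal{C}$, and passes the H\"older estimate $\|f*\mu_n*(\delta_e-P)\|_\infty\le\|f\|_\infty\|\mu_n*(\delta_e-P)\|_1$ to the limit.

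Your truncation argument for commuting $*P$ with the weak$^*$ limit, and the check $\un*m=\un$ giving norm exactly $1$, usefully fill in what the paper leaves implicit. One small point: take $\U$ non-principal, which is possible since $m$ is an accumulation point, so that $\|\mu_n*(\delta_e-P)\|_1\to 0$ survives the ultralimit.
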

\begin{proof}
First note that $\pi$ is linear as the inversion $\check{f}$ as well as the right-regular representation are linear.
Second for any $\gm \in \Gm$,  $|\pi(f)(\gm)| \leq \|f\|_\infty$, since $m$ has norm 1 as an element of $(\ell^\infty\Gm)^*$.
Hence $\|\pi(f)\|_\infty \leq \|f\|_\infty$.

If $\mu_n \in \mathcal{C}$, then for any $P$-harmonic function $f$, $f*\mu_n = f$.
By definition of weak$^*$ convergence and the convolution, this holds for $f*m$ as well.
So $\pi(f) = f$ for any $f \in \BH_P(\Gm)$.

If $\mu_n$ is an approximately $P$-invariant sequence, then for any $f$, Hölder's inequality yields $\| f*\mu_n *(\delta_e-P)\|_\infty \leq \|f\|_\infty \| \mu_n*(1-P)\|_1 \to 0$.
The conclusion follows for $m$ again since the convolution is defined by the pairing.
\end{proof}

For the upcoming proposition, the shorthand $I$ will be used for the identity operator and $P$ for the convolution by $P$ on the right, $f \mapsto f*P$.
\begin{prop}\label{tproj}
Let $P$ be a probability measure, $\mu_n \in \mathcal{C}$ a sequence of approximately $P$-invariant measures and $m$ an accumulation point of $\mu_n$.
Then $\pi:f \mapsto f*m$ is a norm 1 projection from $\ell^\infty(\Gm)$ to $\BH_P(\Gm)$. Furthermore \\
- $\ker (I- \pi) = \img \pi = \ker (I-P)$.\\
- $\img (I-P) \subsetneq \ker \pi = \img (I- \pi)$, \\
- if $\langle \spt P \rangle$ is infinite, then $c_0(\Gm) \subset \ker \pi$ and hence $\ker P \subsetneq \ker \pi$.
\end{prop}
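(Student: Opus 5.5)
By Lemma \ref{meanconv}, applied to the sequence $\mu_n$, which lies in $\mathcal{C}$ and is approximately $P$-invariant, $\pi$ is linear with $\|\pi\|\le 1$. Its image lies in $\BH_P(\Gm)$ and it is the identity on $\BH_P(\Gm)$. Hence $\pi^2=\pi$ and $\img\pi = \BH_P(\Gm)=\ker(I-P)$. The norm is exactly $1$ because $\pi(\un)=\un$. The identities $\ker(I-\pi)=\img\pi$ and $\ker\pi=\img(I-\pi)$ are the general algebra of idempotents: if $\pi f=f$ then $f\in\img\pi$, and conversely $\pi^2=\pi$; similarly $f=(I-\pi)f$ for $f\in\ker\pi$, and $\pi(I-\pi)=0$. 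This settles the first item and the equality in the second.

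For $\img(I-P)\subset\ker\pi$, I take $g=f-f*P$. Every $\mu_n\in\mathcal{C}$ is a norm-convergent combination of the $P^i$, so it commutes with $P$. Hence $(\delta_e-P)*\mu_n=\mu_n*(\delta_e-P)\to0$ in $\ell^1$. By associativity of $\ell^\infty*\ell^1*\ell^1$ and Hölder, $\|g*\mu_n\|_\infty\le\|f\|_\infty\|(\delta_e-P)*\mu_n\|_1\to0$. Since $g*m(\gm)$ is the limit along the accumulation point of $g*\mu_n(\gm)$, we get $\pi(g)=0$.

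For the third item, assume $H=\langle\spt P\rangle$ is infinite. I claim $\mu_n(x)\to0$ for every $x$, which gives $\pi(\delta_x)=0$. Since $\pi$ is continuous and $c_0$ is the norm closure of the finitely supported functions, this yields $c_0\subset\ker\pi$. To prove the claim, suppose $\mu_n(x)\not\to0$ and pass to a subsequence converging pointwise to some $\nu\in\ell^1$ with $0<\|\nu\|_1\le1$. Approximate invariance passes to pointwise limits, using Fatou for the mass escaping to infinity. So $\nu*P=\nu$, that is, $\nu$ is a finite nonzero $P$-stationary measure. Its restriction to a coset $xH$ with nonzero mass is then a finite invariant measure for the right random walk on that coset. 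The cleanest route is to take $\nu$ with $\nu(x)$ maximal and use the maximum principle: $\nu$ is constant on the coset, which is impossible for an infinite coset and a finite measure. Next, $\ker P\subset\ker\pi$. If $f*P=0$ then $f*P^i=0$ for $i\ge1$, so $f*\mu_n=a_0(n)f$, where $a_0(n)$ is the weight of $\delta_e$ in $\mu_n$. Since $a_0(n)\le\mu_n(e)\to0$, we get $\pi f=0$. The inclusion is strict because $\delta_e\in c_0\subset\ker\pi$ while $\delta_e*P=P\neq0$.

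The main obstacle is the strictness $\img(I-P)\subsetneq\ker\pi$. If $f=g-g*P$ with $g$ bounded, then $f*\sum_{i<n}P^i=g-g*P^n$ is bounded uniformly in $n$. So it suffices to produce $f\in\ker\pi$, which I will take in $c_0$ so that the previous step applies, whose partial potentials $\sum_{i<n}f*P^i$ are unbounded. If $P$ is recurrent on $H$, then $f=\delta_e$ already works. In the transient case I would take $f=\sum_k c_k\un_{B_k}$ with $c_k\to0$ and $B_k$ finite sets far apart, chosen so that the expected number of visits to $B_k$ starting from inside $B_k$ exceeds $k/c_k$. This is possible because visits to large balls grow unboundedly. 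Such an $f$ lies in $c_0$ and has unbounded potential. I expect this construction, and in particular checking that it is compatible with the hypotheses in full generality (for instance when $H$ is finite), to be the delicate point. In that finite-$H$ case, dimension counting on each coset suggests the inclusion may fail to be strict, so I would first check there whether the strict inclusion really needs the infinite-support hypothesis.
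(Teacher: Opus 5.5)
Your argument is correct, and it departs from the paper's proof in the two places where the paper is only sketched.

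\emph{Shared steps.} The first item and the inclusion $\img(I-P)\subset\ker\pi$ go as in the paper: Lemma \ref{meanconv}, plus the fact that $\mu_n$ commutes with $P$.

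\emph{The inclusion $c_0\subset\ker\pi$.} The paper only asserts that $m$ is singular because no finitely supported function is $P$-harmonic. Your extraction of a nonzero $\ell^1$ limit $\nu$ with $\nu*P=\nu$, followed by the maximum principle, actually proves $\mu_n(x)\to 0$. The phrase ``constant on the coset'' is justified: the maximum set is finite and mapped into itself by the injective maps $y\mapsto yz^{-1}$, $z\in\spt P$, so it is a union of $H$-cosets.

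\emph{Strictness of $\img(I-P)\subsetneq\ker\pi$.} This is the substantive difference. The paper claims a finitely supported element of $\img(I-P)$ must have zero sum, so that $\delta_e\in c_0\subset\ker\pi$ is a witness. That claim is false for transient $P$, e.g.\ simple random walk on $\zz^3$: the Green function $g=\sum_{i\geq 0}P^i$ is bounded and $g-g*P=\delta_e$. Your criterion, that $f=g-g*P$ forces $\sum_{i<n}f*P^i=g-g*P^n$ to stay bounded, recovers the witness $\delta_e$ exactly in the recurrent case. In the transient case your $f=\sum_k c_k\un_{B_k}$ is genuinely needed. It works because $\sum_i(\un_{hB}*P^i)(h)=\sum_i P^i(B^{-1})$ does not depend on $h$ and increases to $\infty$ as $B$ exhausts $H$, so disjoint translates can be chosen.

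\emph{Finite $H$.} Your suspicion is right. Then $m$ is the uniform measure on $H$, the only $P$-stationary probability there, and $\pi$ averages over each coset $\gm H$. On zero-mean functions on a coset, $I-P$ is invertible with a bound independent of the coset. Hence $\img(I-P)=\ker\pi$. So the strict inclusion in the second item needs the hypothesis of the third item, which the paper's own argument also uses tacitly through $c_0\subset\ker\pi$.

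\emph{A simplification.} $\ker P\subset\ker\pi$ follows at once from $\ker P\subset\img(I-P)$, since $f=f-f*P$ whenever $f*P=0$. Your detour through $a_0(n)\le\mu_n(e)\to 0$ is not needed.
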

\begin{proof}
That $\pi$ is a projection of norm 1 onto $\BH_P(\Gm)$ follows directly from Lemma \ref{meanconv}.
$\ker (I- \pi) = \img \pi$ and $ \ker \pi = \img (I- \pi)$ are generic statements of projections.
Also $\ker(I-P)$ is by definition $\BH_P(\Gm)$ so $\img \pi = \ker(I-P)$



Since $\mu_n \in \mathcal{C}$, $P*\mu_n = \mu_n*P$.
Hence $\pi(f*P) = \pi(f)*P = \pi(f)$.
By linearity $f*(\delta_e-P) = f - f*P$ is in the kernel of $\pi$.

If $\spt P$ generates an infinite group, then, for any finite set $A$, the mass of $P^n$ in $A$ tends to 0.
From this it follows that no finitely supported function belongs to $\ker (I-P)$.
Since $\mu_n *(\delta_e-P) \to 0$, then $m$ is singular (\ie for any $f
\in c_0(\Gm)$, $\langle f \mid m\rangle =0$).
Hence also that $\ker \pi$ contains $c_0(\Gm)$.

Furthermore if $f\in \img(I-P)$ has finite support, then it has zero sum.
Hence the inclusion $\img (1-P) \subset \ker \pi$ is also strict

Finitely supported functions can also be used to show that the inclusion $\ker P \subset \ker \pi$ is strict.
\end{proof}

The group $\Gm$ will be said to be {\bfseries $P$-Liouville} if the space of bounded $P$-harmonic $\BH_P(\Gm)$ is reduced to the constant functions.
This is often called the weak Liouville property.
It is good to note before moving on, that if the support of $P$ does not generate $\Gm$, then $\Gm$ cannot be $P$-Liouville.
Indeed, let $H<\Gm$ be the (strict) subgroup of $\Gm$ generated by the support of $P$. Then, if $f$ is constant on the (right-)cosets of $H$, $f$ is $P$-harmonic. However, $f$ need not take the same constant value on each coset.

If $\Gm$ is $P$-Liouville, then then $\srl{\mathcal{C}}^* \setminus \mathcal{C}$ contains only invariant means.
This follows from the result that $P^n$ (or $\tfrac{1}{2}(P^n+P^{n+1})$ if $P$ is reducible) will tend to an invariant mean on the group, see Kaimanovich and Verschik \cite[\S{}4]{KV}.
It is also easy to see that if $m$ is an invariant mean, then $f*m$ is a constant function.

\section{Central and hypercentral extensions}

The FC-central series of $\Gm$ is defined analogously to the central series. Recall that the FC-centre $Z_{FC}(\Gm)$ is the subgroup of elements with a finite conjugacy class
(for comparison, the centre $Z(\Gm)$ is the subgroup of elements with a conjugacy class of cardinality 1).
The FC-centre is also a [strictly] characteristic subgroup of $\Gm$.
Let $Z^0_{FC}(\Gm)$ consists solely in the identity of $\Gm$ and $Z^{1}_{FC}(\Gm) = Z_{FC}(\Gm)$.
Then define recursively $Z^{n+1}_{FC}(\Gm) :=  \phi^{-1} \Big( Z_{FC} \big( \phi(\Gm) ) \big) \Big)$ where $\phi: \Gm \surj \Gm/Z^n_{FC}$ is the quotient map.
This sequence stabilises at some ordinal $\alpha$
(for a limit ordinal $Z^{\beta}_{FC} (\Gm) = \cup_{\lambda < \beta} Z^{\lambda}(\Gm)$)
and $Z^{\alpha}_{FC}(\Gm)$ is the FC-hypercentre of $\Gm$.

The upper FC-central series terminates at $\Gm$ after finitely many steps if and only if $\Gm$ is virtually nilpotent.
Also a finitely generated group of the [FC-]hypercentre is [virtually] nilpotent;
see Robinson's book \cite[chap 2 p. 50 and \S{}4.3]{Rob} for more details.

Given a map $\phi: X \to Y$ recall that the push forward $\phi_*\mu$ is the measure $\phi_*\mu(x) = \mu(\phi^{-1}(x))$.
Given a group $\Gm$, a measure $\mu$ will be said to be a {\bfseries lazy version} of $\nu$ if there is a $t \in [0,1[$ such that $\mu = t \delta_e + (1-t)\nu$ where $e$ is the identity element of $\Gm$. In more general terms, the convolution by $\mu$ is a convex combination of the identity and the convolution by $\nu$.

The proofs will first be done for the central series in order for them to be more readable. The slightly more general case of the FC-central series will be dealt with afterwards.

\begin{lem}\label{lemprinc}
Let $\eta$ and $\zeta$ two measures supported on $\Gm$, such that $\eta * \zeta = \zeta * \eta$. Let $\mu$ be a convex combination of $\eta$ und $\zeta$. Then a $\mu$-harmonic function is $\eta$-harmonic and $\zeta$-harmonic.
\end{lem}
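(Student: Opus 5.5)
The plan is to use only boundedness of $f$ and the binomial expansion of $\mu^n$. I interpret ``$\mu$-harmonic function'' as a bounded one, i.e. an element of $\BH_\mu(\Gm)$, and take the convex combination to be non-degenerate: $\mu = t\eta + (1-t)\zeta$ with $t \in \,]0,1[$. For $t\in\{0,1\}$ the statement is false for the measure that does not appear.

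\emph{Binomial expansion.} Since $\eta*\zeta=\zeta*\eta$, the binomial formula holds in the (commutative) subalgebra of $\ell^1(\Gm)$ generated by $\eta$ and $\zeta$:
\[
\mu^n = \sum_{k=0}^n b_n(k)\, \eta^k * \zeta^{n-k}, \qquad b_n(k) := \tbinom{n}{k} t^k (1-t)^{n-k}.
\]
As noted in the paper, a $\mu$-harmonic $f$ is $\mu^n$-harmonic for every $n$, so $f = f*\mu^n$. Hence
\[
f*\eta - f = f*\bigl(\mu^n*\eta - \mu^n\bigr),
\]
and Hölder's inequality (as in the proof of Lemma \ref{meanconv}) gives
\[
\|f*\eta - f\|_\infty \le \|f\|_\infty \,\|\mu^n*\eta - \mu^n\|_1 .
\]
It therefore suffices to show that $\|\mu^n*\eta-\mu^n\|_1 \to 0$ as $n\to\infty$. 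The argument for $\zeta$ is identical, with the roles of $k$ and $n-k$ exchanged.

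\emph{Regrouping.} Set $b_n(-1)=b_n(n+1)=0$. Then $\mu^n*\eta = \sum_{k} b_n(k)\,\eta^{k+1}*\zeta^{n-k}$. I would not try to match terms with $\mu^n$ directly, since $\eta^{k+1}*\zeta^{n-k}$ is not one of the terms $\eta^j*\zeta^{n-j}$. Instead I compare both with $\mu^{n+1}$-type terms: write $\mu^n = \mu^n*(t\eta+(1-t)\zeta)\cdot$ ``nothing'' is not available, so the cleaner route is to compare $\mu^n*\eta$ and $\mu^n*\zeta$ first. We have
\[
\mu^n*\eta - \mu^n*\zeta = \sum_{j=0}^{n+1} \bigl(b_n(j-1) - b_n(j)\bigr)\, \eta^j*\zeta^{n+1-j}.
\]
Each $\eta^j*\zeta^{n+1-j}$ is a probability measure, so
\[
\|\mu^n*\eta - \mu^n*\zeta\|_1 \le \sum_j |b_n(j-1)-b_n(j)|.
\]
Moreover $\mu^n*\eta - \mu^n = (1-t)\,(\mu^n*\eta - \mu^n*\zeta)$, because $\mu^n = \mu^n*\mu = t\,\mu^n*\eta + (1-t)\,\mu^n*\zeta$ for probability measures. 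So it all reduces to the bound on the binomial sum.

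\emph{The main step.} The remaining estimate, and the only non-formal point, is
\[
\sum_j |b_n(j-1)-b_n(j)| \longrightarrow 0 .
\]
This is the total variation distance between $\mathrm{Bin}(n,t)$ and its shift by $1$. Since $k\mapsto b_n(k)$ is unimodal, the sum telescopes to $2\max_k b_n(k)$. By Stirling's formula, or a local CLT, $\max_k b_n(k) = O\bigl((n\,t(1-t))^{-1/2}\bigr)$, which tends to $0$ precisely because $t\in\,]0,1[$. This gives $f*\eta=f$, and symmetrically $f*\zeta=f$.
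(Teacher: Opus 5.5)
Your strategy works and is more direct than the paper's, but the reduction step is wrong as written. The identity $\mu^n=\mu^n*\mu$ is false: the right-hand side is $\mu^{n+1}$. So $\mu^n*\eta-\mu^n$ is not $(1-t)(\mu^n*\eta-\mu^n*\zeta)$. Moreover, the sufficient condition you reduce to, $\|\mu^n*\eta-\mu^n\|_1\to0$, fails in general. On $\Gm=\zz$ with $\eta=\delta_1$, $\zeta=\delta_{-1}$, $t=\tfrac12$, the measures $\mu^n$ and $\mu^n*\eta$ are carried by integers of opposite parity, so this norm is $2$ for every $n$.

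The repair is to use harmonicity at two consecutive times, $f=f*\mu^n$ and $f=f*\mu^{n+1}$:
\[
f*\eta-f=f*\bigl(\mu^n*\eta-\mu^{n+1}\bigr)=(1-t)\,f*\bigl(\mu^n*\eta-\mu^n*\zeta\bigr).
\]
Hence $\|f*\eta-f\|_\infty\le 2(1-t)\|f\|_\infty\max_k b_n(k)\to0$, and likewise $f*\zeta-f=t\,f*(\mu^n*\zeta-\mu^n*\eta)$. With this change, the rest of your argument is correct and proves the lemma: the binomial expansion, the telescoping by unimodality, and the $O((nt(1-t))^{-1/2})$ bound on the mode.

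The paper proceeds differently. It lazifies to $t=\tfrac12$ and takes the Ces\`aro averages $G_n=\tfrac1n\sum_{j=0}^n\mu^{*j}$. It regroups them into blocks $\zeta^{*k}*\eta^{*j}*\sum_{i\le m}\eta^{*i}$, using the monotonicity in $\ell$ of the coefficients $a_{k,\ell}$. It controls the short blocks by a normal approximation to get $\|G_n*(\delta_e-\eta)\|_1=O(n^{-1/4})$. It then concludes via the harmonic projection (Lemma \ref{meanconv}, Proposition \ref{tproj}) at a weak$^*$ accumulation point of $(G_n)$. The averaging is what makes $G_n$ approximately $\mu$-invariant despite periodicity, which is exactly the point your version glossed over.

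Your route needs no means, no lazification and no cut-off function; everything reduces to the total variation between $\mathrm{Bin}(n,t)$ and its unit shift. It is also sharper. Averaging your bound gives $\|G_n*(\delta_e-\eta)\|_1\le\tfrac2n+\tfrac{1-t}{n}\sum_{i=0}^n\|\mu^{*i}*(\zeta-\eta)\|_1=O(n^{-1/2})$, which is the exponent Remark \ref{remfiner} only observes numerically. What the paper's route buys is thematic: it presents the lemma as an application of the harmonic projection, showing that the projection onto $\BH_\mu(\Gm)$ built from $(G_n)$ lands in $\BH_\eta(\Gm)$.
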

\begin{proof}
Without loss of generality , it may be assumed that $\mu=\tfrac{1}{2}\eta + \tfrac{1}{2}\zeta$ by considering a lazier versions of the measures $\eta$ and $\zeta$ if necessary (this does not change harmonicity).
By hypothesis $\mu^{*n} = \sum_{i=0}^n \tfrac{1}{2^n} \binom{n}{i} \zeta^{*i}*\eta^{*(n-i)}$.
Now look at how the coefficients $a_{k,\ell}$ of $\zeta^{*k}*\eta^{*\ell}$ in the sum $\sum_{j=0}^n \mu^{*j}$ vary with $\ell$.
Since $\binom{k+\ell}{k} \leq \tfrac{1}{2} \binom{k+\ell+1}{k} \iff 1 \leq \tfrac{1}{2} \tfrac{k+\ell+1}{\ell+1} \iff \ell+1 \leq k$,
it follows that $a_{k,\ell+1}$ is larger than $a_{k,\ell}$ if and only if $\ell < k-1$, equal if $\ell = k-1$ and smaller otherwise.

Hence, it is possible to rewrite
\[\eqtag \label{decomp}
\sum_{j=0}^n \mu^{*j}
= \sum_{k=0}^n \zeta^{*k} *
\sum_{j=0}^{n/2} \eta^{*j} *\sum_{m=1}^{n-j}
b_{k,j,m,n} \sum_{i=0}^{m} \eta^{*i}
\]
To make this hopefully clearer, here are two examples (using $h^k = \eta^{*k}$ and $z^{*i} = z^i$ as shorthands): when $n=6$ the part of the sum corresponding to $k=2$ is
\[
\tfrac{1}{4}z^{2}+
\tfrac{3}{8}z^{2} h+
\tfrac{6}{16}z^{2} h^{2}+
\tfrac{10}{32}z^{2} h^{3}+
\tfrac{15}{64}z^{2} h^{4}
\]
And this can be rewritten as:
\[
z^{2} \Big(
\tfrac{1}{16} h (1+h) +
\tfrac{1}{16} h (1+h+h^{2}) +
\tfrac{1}{64} (1+h+h^{2}) +
\tfrac{15}{64} (1+h+h^{2}+h^{3}) \Big)
\]
And when $n=8$ the part of the sum corresponding to $k=3$ is
\[
\tfrac{1}{8}z^{3}+
\tfrac{4}{16}z^{3} h+
\tfrac{10}{32}z^{3} h^{2}+
\tfrac{20}{64}z^{3} h^{3}+
\tfrac{35}{128}z^{3} h^{4}+
\tfrac{56}{256}z^{3} h^{5}
\]
And this can be rewritten as:
\[
\begin{array}{l}
z^{3}  \Big(
\tfrac{1}{8} (1+h+h^{2}+h^{3}+h^{4}+h^{5})+
\tfrac{3}{32} (1+h+h^{2}+h^{3}+h^{4}) +\\
\qquad \tfrac{1}{32} h(1+h+h^{2}+h^{3}+h^{4}) +
\tfrac{3}{128} h (1+h+h^{2}+h^{3}) +
\tfrac{5}{128} h^{2}(1+h) \Big)
\end{array}
\]
Our aim will be to show that, if $G_n = \tfrac{1}{n} \sum_{i=0}^n \mu^{*n}$, then $G_n * (1-\eta) \to 0$ in $\ell^1$-norm.
For this, a bound on the sum of coefficients $b_{k,j,m,n}$ with small values of $m$ will be required.
Since the maximal value of $a_{k,\ell}$ coefficients is around the parameters $\ell \approx k$, bounding the sum of $a_{k,\ell}$ will also bound the small values of $m$ in $b_{k,j,m,n}$.
So given some increasing function $f:\nn \to \nn$, let us bound the value of
$ \displaystyle
\sum_{i=0}^{n}
\sum_{j \in \nn; |\tfrac{i}{2}-j|\leq f(i)}
\frac{1}{2^i} \binom{i}{j}
$.
Using estimates on the random walk on $\zz$ or the approximation of Bernoulli distribution by the normal distribution, it is not hard to see that, if $f(n)$ grows slower than $\sqrt{n}$, there is a $K>0$ so that this sum is
$
 \leq K \sum_{i=0}^{n} \frac{f(i)}{\sqrt{i}} \leq 2K f(n) \sqrt{n}.
$
A much finer estimate is possible (see Remark \ref{remfiner}).

Turning back to $G_n = \tfrac{1}{n} \sum_{i=0}^n \mu^{*n}$, the claim was that $G_n * (1-\eta) \to 0$ in $\ell^1$.
To see this, note that in our decomposition \eqref{decomp}, the mass associated to $b_{k,j,m,n}$ goes from $(m+1)b_{k,j,m,n}$ to (at most) $2b_{k,j,m,n}$ after the convolution $*(1-\eta)$.
Those $b$ where $m \leq f(n)$ yield at most $f(n)\sqrt{n} /n$ of the mass, so they tend to zero.
The rest see their value decrease by a factor of $2/ f(n)$.
Hence $\| G_n *(1-\eta) \|_{\ell^1} \leq \tfrac{f(n)\sqrt{n}}{ n} + \tfrac{2}{f(n)}$.
Consequently, taking for example $f(n) = n^{1/4}$, any accumulation point $m$ of $G_n$ in $\M(\Gm)$ is approximately $\eta$-invariant.

Consequently, by Lemma \ref{meanconv}, the image of $m$ lies in $\BH_\eta(\Gm)$.
But, by Proposition \ref{tproj}, $m$ is a projection from $\ell^\infty(\Gm)$ to $\BH_\mu(\Gm)$.
So for any $\mu$-harmonic function $f$, $f = f *m$ and the latter is a $\eta$-harmonic function.
So $\BH_\mu(\Gm) \subset \BH_\eta(\Gm)$.
Since $\zeta$ and $\eta$ play symmetric roles, the same holds for $\zeta$.
\end{proof}

Note that the above proof is essentially reducing the argument to a (non-symmetric) random walk on $\zz^2$.

\begin{rmk}\label{remfiner}
Actually, when trying to bound the coefficients corresponding to small values of $m$,
it is sufficient to show that values of $a_{k,\ell}$ around the pairs $\ell = k$ are not much larger than those outside this neighbourhood.
So one could be more careful and bound only
\[
\sum_{i=0}^{n} \sum_{j=i-f(i)}^{i+f(i)} 
\left(
\frac{1}{2^{i+j}} \binom{i+j}{j}
-
\min \Big( \frac{1}{2^{i \pm f(i)}} \binom{i \pm f(i)}{f(i)} \Big)
\right)
\]
Assuming $f(i)$ is grows much slower than $\sqrt{i}$, standard estimates show
this is $\leq \sum_{i=0}^{n} \frac{8 f(i)^3}{\sqrt{\pi} i^{3/2}}$.
This can then be used to show that $\| G_n *(1-\eta) \|_{\ell^1} \leq K n^{-3/8}$ (instead of $n^{-1/4}$ in the proof).

However, numerical estimates seem to indicate that $\| G_n *(1-\eta) \|_{\ell^1} \leq K n^{-a}$ where $a$ is close to $1/2$ (the data for $n \leq 5000$ hints that $a \geq 0,48$). So this is also far from optimal.
\end{rmk}

Here is a first amusing corollary of Lemma \ref{lemprinc}:
\begin{cor}\label{corfun}
If $\Gm$ is a countable Abelian group, then $\Gm$ is $\mu$-Liouville for any (symmetric) measure $\mu$ with $\Gm = \langle \spt \mu \rangle$.
\end{cor}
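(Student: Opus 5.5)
The idea is to use Lemma \ref{lemprinc} repeatedly to split $\mu$ into its pieces supported on single points, and then see that harmonicity for each such point mass already forces the function to be constant.

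Let $f$ be a bounded $\mu$-harmonic function, with $\mu = \sum_{s \in S} \mu(s)\delta_s$ and $S = \spt \mu$. Since $\Gm$ is Abelian, all convolutions commute, so for any $s \in S$ with $\mu(s)<1$ we may write
\[
\mu = \mu(s)\,\delta_s + (1-\mu(s))\,\zeta_s,
\qquad
\zeta_s := \tfrac{1}{1-\mu(s)}\sum_{t \neq s}\mu(t)\delta_t ,
\]
and $\delta_s * \zeta_s = \zeta_s * \delta_s$. Lemma \ref{lemprinc} then says $f$ is $\delta_s$-harmonic. If $\mu(s)=1$, then $\mu=\delta_s$ and this is trivially true. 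Now $\delta_s$-harmonicity means $f * \delta_s = f$, that is $\rho_{s^{-1}} f = f$. So $f$ is invariant under right translation by $s$. This holds for every $s\in S$, hence $f$ is invariant under translation by every element of $\langle S \rangle = \Gm$, and therefore $f$ is constant. Thus $\BH_\mu(\Gm)$ consists only of the constants, i.e. $\Gm$ is $\mu$-Liouville.

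Symmetry is not needed, and neither are moment conditions, since Lemma \ref{lemprinc} only uses commutation.

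The one point to be careful about is the case where $S$ is infinite. A one-step splitting as above still works: each $s\in S$ carries positive mass, so $\zeta_s$ is a genuine probability measure whenever $\mu(s)<1$. No induction over $S$ is required, because each $s$ is handled separately with the same decomposition. The lemma's proof also tacitly allows lazy versions of the measures, which is harmless here.
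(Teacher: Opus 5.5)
Your argument is correct, but it takes a different route from the paper's.

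The paper fixes a finitely generated subgroup $B$ and writes $\mu$ as a convex combination of a finitely supported $\beta$, whose support generates a subgroup containing $B$, and a remainder $\alpha$. It uses Lemma \ref{lemprinc} to get $\beta$-harmonicity. It then imports the Liouville property of finitely supported measures on finitely generated Abelian groups (the Woess citation) to conclude that $f$ is constant on $B$-cosets. Finally it exhausts $\Gm$ by such subgroups $B$.

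You instead split off a single Dirac mass $\delta_s$. For a point mass, $\delta_s$-harmonicity is simply invariance under right translation by $s$. So you need neither an external Liouville theorem nor the exhaustion step. Lemma \ref{lemprinc} alone then gives the Choquet--Deny theorem for countable Abelian groups. Your remarks that symmetry plays no role and that infinite support causes no trouble are both right.

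What the paper's more modular decomposition buys is robustness in settings where point masses no longer commute with the rest of the measure. In Lemma \ref{fccentral} the commuting piece must be conjugation invariant, spread over finite conjugacy classes. Harmonicity for such a piece is not trivially translation invariance, so a genuine Liouville input for the (virtually Abelian) subgroup it generates is needed there. For central elements, as in Lemma \ref{lemcentre}, your point-mass trick would work just as well and would remove that dependence too.
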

\begin{proof}
To shorten the proof, let us assume it is known that a finitely supported measure on a finitely generated Abelian group are Liouville: see for example Woess' treatise \cite[(25.9)]{Woe}.
Let $B<\Gm$ be any finitely generated subgroup.
Then $\mu$ can be written as a convex combination of $\beta$ and $\alpha$, where $B< \spt \beta$ and $\beta$ is finitely supported.
Since $\Gm$ is Abelian, $\alpha * \beta = \beta * \alpha$.
By Lemma \ref{lemprinc} a $\mu$-harmonic function is also $\beta$-harmonic.
But since $\beta$-harmonic functions are constants ($\beta$ generates a finitely generated group), $\mu$-harmonic function are constants on the $B$-cosets.
Since $B$ can be any finitely generated subgroup of $\Gm$, the conclusion follows for $\Gm$.
\end{proof}

\begin{lem}\label{lemcentre}
Let $Z < Z(\Gm)$ (hence $Z \lhd \Gm$) and $H<\Gm$ be such that $\Gm = \langle Z,H\rangle$.
Denote $\phi:\Gm \to \Gm/Z$.
Fix a symmetric measure $\nu$ on $H$.
Let $\mu$ be any symmetric measure on $\Gm$ such that $\spt \mu \cap Z$ generates $Z$ and $\phi_*\mu$ is a lazy version of $\phi_*\nu$.
Then any $\mu$-harmonic function $f$ is constant on the $Z$-cosets.
\end{lem}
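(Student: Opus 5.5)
The plan is to split $\mu$ as a convex combination $\mu = t\,\zeta + (1-t)\,\eta$, where $\zeta$ is the restriction of $\mu$ to $Z$, normalised, and $\eta$ is the normalised remainder. Then Lemma \ref{lemprinc} applies, and Corollary \ref{corfun} handles the abelian part.

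\emph{Decomposition.} Write $\zeta := \mu|_{Z}/\mu(Z)$ and $\eta := \mu|_{\Gm\setminus Z}/\mu(\Gm \setminus Z)$, so that $\mu = t\zeta + (1-t)\eta$ with $t = \mu(Z)$. If $\mu(\Gm\setminus Z) = 0$, take $\mu = \zeta$ directly. Here $t>0$ because $\spt\mu\cap Z$ generates $Z$; if $Z$ is trivial there is nothing to prove. Both $\zeta$ and $\eta$ are symmetric, since $Z$ and its complement are closed under inversion.

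\emph{Commutation.} Since $Z$ is central, $\delta_z*\delta_x = \delta_{zx} = \delta_{xz} = \delta_x*\delta_z$ for $z\in Z$ and $x\in\Gm$. By bilinearity, $\zeta*\eta = \eta*\zeta$; one only needs that convolution of two probability measures is defined by absolutely convergent sums, which is fine in $\ell^1$. Lemma \ref{lemprinc} then shows that every $\mu$-harmonic function $f$ is also $\zeta$-harmonic.

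\emph{Restriction to cosets.} For $x\in\Gm$, set $g(z) := f(xz)$ for $z\in Z$. Since $\zeta$ is supported on $Z$, $\zeta$-harmonicity reads $f(xz) = \sum_{w\in Z} f(xzw)\zeta(w)$. Thus $g$ is a bounded $\zeta$-harmonic function on the abelian group $Z$, and $\zeta$ is symmetric with $\langle\spt\zeta\rangle = Z$. Corollary \ref{corfun} then says $g$ is constant, so $f$ is constant on the coset $xZ = Zx$. Since $Z$ is central, left and right cosets agree, so there is no ambiguity in which cosets are meant.

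\emph{Remarks and main obstacle.} The hypothesis that $\phi_*\mu$ is a lazy version of $\phi_*\nu$ is not needed for this statement. It will matter later, when identifying $\mu$-harmonic functions constant on $Z$-cosets with $\nu$-harmonic functions on $\Gm/Z$. The delicate point is the trivial edge cases and checking that Corollary \ref{corfun} applies to $\zeta$. That corollary rests on the Liouville property for finitely supported measures on finitely generated abelian groups, which is taken as known. Corollary \ref{corfun} covers countable $Z$ with a possibly infinitely supported $\zeta$, so no finiteness assumption on $Z$ is required.
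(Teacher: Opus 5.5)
Your proposal is correct and follows the same route as the paper: split $\mu$ into a part $\zeta$ supported on the central subgroup $Z$ plus a remainder, use centrality to get commutation, apply Lemma \ref{lemprinc} to obtain $\zeta$-harmonicity, and conclude with the Liouville property of the abelian group $Z$ (Corollary \ref{corfun}). Your version is a little more careful than the paper's. You normalise the restriction of $\mu$ to $Z$ so that $\zeta$ is symmetric with support generating $Z$, and you restrict $f$ explicitly to the cosets. You are also right that the lazy-version hypothesis plays no role in this lemma.
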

\begin{proof}
From the hypothesis, one may write $\mu = s\zeta + t\eta$ where $\zeta$ is a measure supported on $Z$, $\phi_* \eta$ is a lazy version of $\phi_*\nu$ and $s+t=1$.
Note that $\zeta*\eta = \eta*\zeta$ since $\zeta$ is supported in $Z \subset Z(\Gm)$.
By Lemma \ref{lemprinc} the $\mu$-harmonic function will be $\zeta$-harmonic.
Since $Z$ is Abelian, the corresponding measure $\zeta$ is Liouville

This means that $\zeta$-harmonic functions are constant on the subgroup generated by $\zeta$, which is $Z$.
\end{proof}

Note that there are finitely presented groups whose center is an infinitely generated Abelian group (for example $\mathbb{Q}$; see Ould Houcine's paper \cite{OH}).

\begin{teo}\label{teohyper}
Let $\Gm$ be a countable group. Assume that $N\lhd \Gm$ is contained in the hypercentre of $\Gm$ and let $\phi : \Gm \to \Gm/N$ be the quotient map.
Let $\nu$ be a finitely supported symmetric measure on $\Gm/N$ whose support generate $\Gm/N$.
Let $\mu$ be any measure such that generates $N$ and such that $\phi_*\mu$ is a lazy version of $\nu$.
Assume \\
\begin{tabular}{r@{\;}l}
either & that $N$ is finitely generated\\
or & that $\exists n$ such that $\forall$ ordinal $\lambda$, $\spt \mu^n$ generates $N \cap Z^\lambda(\Gm)$.
\end{tabular}\\
Then $\Gm$ is $\mu$-Liouville if and only if $\Gm/N$ is $\nu$-Liouville.
\end{teo}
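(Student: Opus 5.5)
The plan is to induct along the upper central series of $\Gm$ intersected with $N$, using Lemma \ref{lemcentre} at each stage. The easy direction is handled first. If $f$ is a bounded $\nu$-harmonic function on $\Gm/N$, then $f\circ\phi$ is bounded. Since $\phi_*\mu = t\delta_e+(1-t)\nu$ is a lazy version of $\nu$, the function $f\circ\phi$ is $\phi_*\mu$-harmonic on the quotient, and therefore $\mu$-harmonic on $\Gm$. So if $\Gm$ is $\mu$-Liouville, $f\circ\phi$ is constant, hence $f$ is constant, and $\Gm/N$ is $\nu$-Liouville.

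For the converse, let $f\in\BH_\mu(\Gm)$. It suffices to show that $f$ is constant on $N$-cosets. Once this is known, $f$ descends to a bounded $\phi_*\mu$-harmonic function on $\Gm/N$. Since lazy versions have the same harmonic functions, this function is $\nu$-harmonic, hence constant.

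To show constancy on $N$-cosets, I would set $N_\lambda = N\cap Z^\lambda(\Gm)$ and prove by transfinite induction on $\lambda$ that $f$ is constant on $N_\lambda$-cosets.
\begin{itemize}
\item Limit ordinals are immediate, since $N_\lambda$ is then the increasing union of the $N_\beta$ for $\beta<\lambda$, and any two points of one $N_\lambda$-coset already lie in a common $N_\beta$-coset.
\item For the successor step, suppose $f$ is constant on $N_\lambda$-cosets. Then $f$ descends to $\bar f$ on $\bar\Gm=\Gm/N_\lambda$, and $\bar f$ is harmonic for the pushed-forward measure $\bar\mu$. The image $Z$ of $N_{\lambda+1}$ in $\bar\Gm$ is central, because $N_{\lambda+1}\subset Z^{\lambda+1}(\Gm)$ and $[N_{\lambda+1},\Gm]\subset Z^\lambda\cap N=N_\lambda$, using that $N$ is normal.
\end{itemize}

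To apply Lemma \ref{lemcentre} to $\bar f$ on $\bar\Gm$, I would decompose $\bar\mu = s\zeta+t\eta$ with $\zeta$ supported on $Z$ and $\supp\zeta$ generating $Z$. The centrality of $Z$ gives $\zeta*\eta=\eta*\zeta$. Lemma \ref{lemprinc} then shows $\bar f$ is $\zeta$-harmonic, and Corollary \ref{corfun} shows it is constant on $Z$-cosets. (The symmetrisation of $\zeta$ may need a small detour: either assume $\mu$ symmetric as in Lemma \ref{lemcentre}, or replace $\zeta$ by a symmetric measure with the same harmonic functions.) Lifting back, $f$ is constant on $N_{\lambda+1}$-cosets.

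The main obstacle is ensuring that $\bar\mu$ actually puts mass on a generating set of $Z$. This is exactly where the two alternative hypotheses enter.
\begin{itemize}
\item Under the second hypothesis, I would work with $\mu^n$ in place of $\mu$. A $\mu$-harmonic function is $\mu^n$-harmonic, and the commuting decomposition only needs the part of $\mu^n$ supported on $N_{\lambda+1}$, which generates $N_{\lambda+1}$ and hence its image generates $Z$.
\item Under the first hypothesis, $N$ is finitely generated and hypercentral, hence nilpotent. The series $N_\lambda$ then stabilises after finitely many steps, and each $N_{\lambda+1}/N_\lambda$ is finitely generated. Since $\mu$ generates $N$, finitely many convolution powers $\mu^{k}$ charge lifts of a finite generating set of each quotient, and one takes $n$ to be the maximum of these exponents.
\end{itemize}
This reduction of the first case to the second is the step I expect to need the most care. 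In particular, one must check that the portion of $\mu^n$ landing in $N_{\lambda+1}$ projects onto a generating set of $N_{\lambda+1}/N_\lambda$, and not merely onto a generating set of $N$.
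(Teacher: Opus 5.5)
Your proposal is essentially the paper's proof: the easy direction by composing with $\phi$, then transfinite induction on $N\cap Z^\lambda(\Gm)$ applying Lemma \ref{lemprinc} to the central part of the pushed-forward $\mu^n$, unions at limit ordinals, and nilpotency to reduce the finitely generated case to the uniform-$n$ hypothesis (a reduction which, in the paper as in your sketch, tacitly needs $\mu$ symmetric, and needs $\mu$ lazy if one insists on a single $n$). Your ending, which descends $f$ directly to a bounded $\phi_*\mu$-harmonic and hence $\nu$-harmonic function on $\Gm/N$, is cleaner than the paper's detour through $\nu^n$ and entropy growth.
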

It is not too difficult to find a measure $\mu$ so that $\spt \mu \supset N$ and $\mu$ satisfy the assumption of the theorem. Furthermore, if $\Gm$ is finitely generated, one can also choose $\mu$ to have finite moments (insofar as $\nu$ has finite moments too); see Remark \ref{remmuexist} below.
\begin{proof}
One direction is obvious. Let $\phi: \Gm \to \Gm/N$ be the quotient map and $f:\Gm/N \to \rr$ a non-constant bounded $\nu$-harmonic function, then $f \circ \phi$ is also non-constant and $\mu$-harmonic.

For the other direction, start with a $\mu$-harmonic function $f$.
Consider $N_i = N \cap Z^i(\Gm)$ (these subgroups are also normal) and denote $\phi_i: \Gm \to \Gm/N_i$.
If $N$ is finitely generated, then $N$ is nilpotent.
Hence the sequence $N_i$ has only finitely many distinct terms.
As $\spt \mu$ generates $N$, there is a $n$ such that for any ordinal $\lambda$, $\spt (\mu^n)$ generates $N_\lambda$.
If $N$ is not finitely generated, this is part of our hypothesis.

Now $f$ is $\mu^n$-harmonic for all $n$.
Thus, by Lemma \ref{lemcentre}, $f$ is constant on $N_1$-cosets.
Now proceed by induction.
If the ordinal has a successor, assume that $f$ is constant on $N_i$-cosets.
Then $f\circ \phi_i$ is a $\phi_{i*}\mu^n$-harmonic on $\Gm/N_i$.
Applying Lemma \ref{lemprinc} allows us to conclude $f\circ \phi_i$ is constant on $Z_{i+1}/Z_i$-cosets.
Thus $f$ is constant on $N_{i+1}$-cosets.
If $\beta$ is a limit ordinal, then the transfinite induction hypothesis is that $f$ is constant on $N_\lambda$ for $\lambda < \beta$.
But this implies that $f$ is constant on $N_\beta = \cup_{\lambda < \beta} N_\lambda$.

Hence it follows by transfinite induction, that $f$ is constant on $N$ and that $f \circ \phi$ is $\phi_*\mu^n$-harmonic.

If $\Gm$ were not $\nu$-Liouville, then there would a $\nu$-harmonic function $f$ which is not constant, but, since $\phi_*\mu^n= \nu^n$ and $f$ is also $\nu^n$-harmonic, it is constant on the $N$-cosets.
Then $f\circ \phi$ is not constant either.
Hence the entropy of successive convolutions of $\phi_*\mu^n$ grows linearly, which implies that this is also the case for $\nu$; see among many possibilities Avez \cite{Avez74}
\end{proof}

\begin{rmk}\label{remmuexist}
It is not difficult to construct a measure $\mu$ that satisfies the hypothesis of Theorem \ref{teohyper}, and, where it makes sense, has finite moment conditions.

First split $\mu = \mu_1+\mu_2$. Given $\nu$ on $\Gamma/N$, then pick for each $\gm \in \spt \nu$ an element $\gm' \in \gm N$ and let $\mu_2(\gm') = t\nu(\gm)$ (for some $t \in ]0,1[$).
If moments conditions are relevant, then $\Gm$ should be finitely generated. In that case it is always possible to pick the $\gm'$ so that their distance to the identity is (up to multiplicative constants which depend on the choice of generating set for $\Gm/N$) that of $\gm$.

Next let $\{\gm_i\}_{i \in \nn}$ be an enumeration of the elements of $N$.
Then set $\mu_1(\gm_i) = f(i)$ where $f(i)$ is some function which sums to $1-t$.
If moments conditions are pertinent, let $\mu_0(\gm_i) = f(i) e^{-|\gm_i|}$ and then $\mu_1 = (1-t) \mu_0 / \|\mu_0\|_{\ell^1}$.

Then $\mu$ will satisfy the hypothesis of Theorem \ref{teohyper}.
As the proof shows, when $N$ is finitely generated, then it is even more simple to construct a measure $\mu$ (since there are many finitely supported measures which generate $N$, and hence a $n$ such that $\spt \mu^n$ generates $N \cap Z^\lambda(\Gm)$).
\end{rmk}

Let us mention the statements in the FC-central case:
\begin{lem}\label{fccentral}
Let $Z < Z_{FC}(\Gm)$ and $H<\Gm$ be (both) finitely generated and such that $\Gm = \langle Z,H\rangle$, $Z \lhd \Gm$ and $\phi:\Gm \to \Gm/Z$.
Fix a measure $\nu$ on $H$.
Let $\mu$ be any measure on $\Gm$ such that $\mu = \mu_1 + \mu_2$, $\mu_1$ generates $Z$, $\mu_1$ is conjugation invariant, $\mu_2$ generates $H$ and $\phi_*\mu$ is a lazy version of $\phi_*\nu$.
Then any $\mu$-harmonic function $f$ is constant on the $Z$-cosets.
\end{lem}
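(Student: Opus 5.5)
The plan is to mimic the proof of Lemma \ref{lemcentre}, replacing centrality of $Z$ by conjugation invariance of $\mu_1$. I would write $\mu = s\zeta + t\eta$ with $\zeta = \mu_1/\|\mu_1\|_1$ and $\eta = \mu_2/\|\mu_2\|_1$, so that $s+t=1$. The key observation is that a conjugation invariant measure commutes under convolution with every measure. Indeed, $\zeta(x^{-1}yx)=\zeta(y)$ for all $x,y$, so
\[
(\delta_x*\zeta)(g)=\zeta(x^{-1}g)=\zeta(x^{-1}(gx^{-1})x)=\zeta(gx^{-1})=(\zeta*\delta_x)(g).
\]
By linearity and $\ell^1$-continuity this gives $\zeta*\eta=\eta*\zeta$. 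It is here that $Z<Z_{FC}(\Gm)$ matters: conjugacy classes in $Z$ are finite, so a finitely supported measure generating $Z$ can be averaged over conjugation into a finitely supported conjugation invariant one. Lemma \ref{lemprinc} then applies, and every $\mu$-harmonic $f$ is $\zeta$-harmonic.

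It remains to show that $\zeta$-harmonic functions are constant on $Z$-cosets. I would restrict $f$ to each right coset, i.e.\ consider $z\mapsto f(xz)$ or $f(zx)$ depending on the side convention. These restrictions are bounded $\zeta$-harmonic functions on $Z$, since $\zeta$ is supported in $Z$. So it suffices that $Z$ be $\zeta$-Liouville.

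Here $Z$ is a finitely generated subgroup of the FC-centre, hence finitely generated FC, hence virtually abelian; in fact $Z/Z(Z)$ is finite. In general $\zeta$ need not be symmetric. If $\mu_1$ is finitely supported, or has finite first moment, then $Z$ has polynomial growth and Liouville follows from standard results (e.g.\ \cite{Woe}, or the entropy criterion \cite{Avez74}). Alternatively, and closer to the paper's spirit, I would use Corollary \ref{corfun}/Lemma \ref{lemcentre} on the centre $Z(Z)$, which has finite index, and then handle the finite quotient $Z/Z(Z)$. A bounded harmonic function that is constant on cosets of a finite-index normal subgroup descends to a harmonic function on a finite group, and by the maximum principle together with generation it is constant.

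The main obstacle is this Liouville step for $\zeta$ on $Z$ without symmetry or moment assumptions, and in particular reducing to the centre of $Z$. For that I would first try Lemma \ref{lemprinc} a second time. Write $\zeta^{k}$, for $k$ a multiple of $[Z:Z(Z)]$ (a power of $\zeta$ whose support meets $Z(Z)$ in a generating set), as a convex combination of a piece supported on $Z(Z)$ and the rest. The piece on $Z(Z)$ commutes with everything because it is central. Lemma \ref{lemprinc} then shows that $f$ is constant on $Z(Z)$-cosets, and the finite-group argument finishes the proof.
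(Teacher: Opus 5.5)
Your argument is the paper's. You split off the conjugation-invariant part, observe that conjugation invariance makes $\mu_1$ commute with every measure, and apply Lemma \ref{lemprinc}. You then reduce, by restricting to cosets, to the Liouville property of the finitely generated virtually abelian group $Z$.

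The only difference is in that last step. The paper simply quotes it (``virtually Abelian groups also do not possess non-constant harmonic function''). You instead prove it: a second application of Lemma \ref{lemprinc} reduces to the centre $Z(Z)$, and a maximum principle on the finite group $Z/Z(Z)$ finishes. This works as a self-contained replacement for the black box. Unlike your polynomial-growth/entropy option, it needs no moment assumption. Two details in it need correcting, though.

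First, the parenthetical claim is false. For $k$ a multiple of $q=[Z:Z(Z)]$, $\spt\zeta^k$ need not meet $Z(Z)$ in a generating set. Take $Z=\zz\times S_3$ and $\zeta$ uniform on $\{1\}\times(S_3\setminus\{e\})$, which is conjugation invariant and generating. Then $\spt\zeta^k\subset\{k\}\times S_3$, so the intersection with $Z(Z)$ generates at most $k\zz\times\{e\}$.

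The fix is to pass to $\zeta'=\tfrac{1}{2}(\delta_e+\zeta)$. It has the same harmonic functions, and $\spt\zeta'^k=(S\cup\{e\})^k$ increases with $k$, where $S=\spt\zeta$. Take $w\in Z(Z)$ written as a word in $S$, and replace each inverse letter via $s^{-1}=s^{q-1}s^{-q}$. Moving the central factors $s^{-q}$ to the end shows that the semigroup generated by $S$ meets $Z(Z)$ in a generating set. Since $Z(Z)$ is finitely generated, $\spt\zeta'^k\cap Z(Z)$ then generates $Z(Z)$ for $k$ large.

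Second, the piece of $\zeta'^k$ supported on $Z(Z)$ is not symmetric. You should therefore conclude on the abelian group $Z(Z)$ with the Choquet--Deny theorem, which is valid for arbitrary measures. Corollary \ref{corfun} and Lemma \ref{lemcentre} are stated for symmetric measures and do not apply directly.
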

The proof is almost identical to the proof of Lemma \ref{lemcentre}. Here are the caveats. First note that $\mu_1*\mu_2= \mu_2*\mu_1$ because $\mu_1$ is constant on conjugacy classes. Second the group $Z$ is no longer Abelian, but virtually Abelian groups also do not possess non-constant harmonic function. Corollary \ref{corfun} can then be applied to $Z$, if $Z$ is not finitely generated.

Theorem \ref{teohyper} can adapted to the FC-case:
\begin{teo}\label{teofchyper}
Let $\Gm$ be a countable group. Assume that $N\lhd \Gm$ is contained in the FC-hypercentre of $\Gm$ and let $\phi : \Gm \to \Gm/N$ be the quotient map.
Let $\nu$ be a finitely supported symmetric measure on $\Gm/N$ whose support generate $\Gm/N$.
Let $\mu$ be any measure such that generates $N$, $\mu=\mu_1+\mu_2$ with $\mu_2$ such that $\phi_*\mu_2$ is a lazy version of $t\nu$ (with $t=\|\mu_2\|_{\ell^1}$) and $\mu_1$ such that $\forall$ ordinal $\lambda$, $(\spt \mu_1 \cap Z_{FC}^{\lambda+1}(\Gm))/Z_{FC}^\lambda(\Gm)$ is conjugation invariant.
Assume \\
\begin{tabular}{r@{\;}l}
either & that $N$ is finitely generated\\
or & that $\exists n$ such that $\forall$ ordinal $\lambda$, $\spt \mu^n$ generates $N \cap Z^\lambda(\Gm)$.
\end{tabular}\\
Then $\Gm$ is $\mu$-Liouville if and only if $\Gm/N$ is $\nu$-Liouville.
\end{teo}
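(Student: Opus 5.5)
The proof follows that of Theorem \ref{teohyper}, with Lemma \ref{fccentral} replacing Lemma \ref{lemcentre} at each step of a transfinite induction along $N_\lambda := N \cap Z^\lambda_{FC}(\Gm)$.

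\emph{The easy direction.} If $f$ is a non-constant bounded $\nu$-harmonic function on $\Gm/N$, then $f\circ\phi$ is non-constant. It is also $\mu$-harmonic, because $\phi_*\mu$ is a lazy version of $\nu$, and lazy versions do not change harmonicity.

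\emph{Setting up the filtration.} Put $\phi_\lambda:\Gm\to\Gm/N_\lambda$. Each $N_\lambda$ is normal, since $Z^\lambda_{FC}(\Gm)$ is characteristic. If $N$ is finitely generated, it lies in the FC-hypercentre, so it is virtually nilpotent. Then the chain $N_\lambda$ has only finitely many distinct terms. Since $\spt\mu$ generates $N$, some fixed power $\mu^n$ has support generating every $N_\lambda$. In the other case this is exactly the hypothesis. As $f$ is $\mu^n$-harmonic, I work with $\mu^n$ throughout.

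\emph{The induction.} Assume $f$ is constant on $N_\lambda$-cosets. Then $f\circ\phi_\lambda^{-1}$ is a well-defined $\phi_{\lambda*}\mu^n$-harmonic function on $\Gm/N_\lambda$. Inside this quotient, $N_{\lambda+1}/N_\lambda$ lies in the FC-centre of $\Gm/N_\lambda$. I then split $\phi_{\lambda*}\mu^n$ as $\zeta+\eta$:
\begin{itemize}
\item $\zeta$ is the part coming from $\mu_1$ supported on $N_{\lambda+1}/N_\lambda$, which is conjugation invariant by hypothesis;
\item $\eta$ is the remainder.
\end{itemize}
A measure constant on conjugacy classes commutes with every measure, since $\delta_g*\zeta*\delta_{g^{-1}}=\zeta$. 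Hence $\zeta*\eta=\eta*\zeta$, and Lemma \ref{lemprinc} makes $f\circ\phi_\lambda^{-1}$ harmonic for the normalized $\zeta$. The support of $\zeta$ generates $N_{\lambda+1}/N_\lambda$, a subgroup of the FC-centre. Finitely generated subgroups of the FC-centre are finite-by-abelian, hence virtually abelian, so they are Liouville. Corollary \ref{corfun}, used as in the remark after Lemma \ref{fccentral}, handles the case where the group is not finitely generated. So $f$ is constant on $N_{\lambda+1}$-cosets. At limit ordinals I take unions, as in Theorem \ref{teohyper}. The base case $\lambda=0$ is trivial.

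\emph{Conclusion.} Once $f$ is constant on $N$-cosets, it descends to a $\phi_*\mu^n$-harmonic function on $\Gm/N$. Now $\phi_*\mu^n$ is a convolution power of a lazy version of $\nu$. The Liouville property for a lazy power is equivalent to that for $\nu$, via the entropy or Avez criterion invoked in Theorem \ref{teohyper}. This gives the reverse implication.

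\emph{Main obstacle.} The delicate point is the commutation needed for Lemma \ref{lemprinc}. The hypothesis only gives conjugation invariance of $\spt\mu_1\cap Z^{\lambda+1}_{FC}$ modulo $Z^\lambda_{FC}$, whereas commutation needs the \emph{measure}, not just its support, to be conjugation invariant. Powers $\mu^n$ also mix $\mu_1$ and $\mu_2$. My first approach is to apply Lemma \ref{lemprinc} to $\mu$ itself, rather than to $\mu^n$, in each quotient, with $\zeta$ the image of $\mu_1$ restricted to the relevant layer. I would read the hypothesis as making that image conjugation invariant as a measure. The generation condition from $\mu^n$ would then be transferred through the observation that a $\zeta$-harmonic function is constant on the group generated by $\spt\zeta$, which contains all the relevant products. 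If that reading fails, I would replace $\zeta$ by its average over the finite conjugacy classes, which is a conjugation-invariant measure with the same support.
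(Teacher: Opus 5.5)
Your architecture is the paper's. Its proof is literally the induction of Theorem \ref{teohyper} with Lemma \ref{fccentral} in place of Lemma \ref{lemcentre}, and Lemma \ref{fccentral} takes commutation from $\mu_1$ being constant on conjugacy classes and the layer from ``$\mu_1$ generates $Z$''. You are right that this does not mesh with the theorem's hypotheses (support-level invariance, generation via $\spt\mu^n$), which the paper passes over. But your induction step then asserts exactly the two missing facts, and neither of your repairs supplies them.

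\emph{Commutation.} The restriction of $\phi_{\lambda*}\mu^n$ to $N_{\lambda+1}/N_\lambda$ contains cross terms such as $gh$ with $g,h\in\spt\mu_2$, so it is not conjugation invariant even when $\mu_1$ is. Averaging $\zeta$ over conjugacy classes is not a fix: Lemma \ref{lemprinc} must be applied to a decomposition of the measure $f$ is actually harmonic for, and $\bar\zeta+\eta\neq\zeta+\eta$. What works, using only the support-level hypothesis, is a sub-measure. In $\Gm/N_\lambda$, let $\zeta$ be the image of $\mu_1|_{Z^{\lambda+1}_{FC}}$ (from $\mu$, not $\mu^n$), and set $\zeta'(x)=\min_{y\in C}\zeta(y)$ for $x$ in a conjugacy class $C\subset\spt\zeta$. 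These classes are finite, because two conjugates of $x\in N$ lying in the same $Z^\lambda_{FC}$-coset differ by an element of $N\cap Z^\lambda_{FC}=N_\lambda$; the same remark turns the hypothesis modulo $Z^\lambda_{FC}$ into invariance of $\spt\zeta$ modulo $N_\lambda$. Hence $0\le\zeta'\le\zeta$, $\zeta'$ is conjugation invariant with $\spt\zeta'=\spt\zeta$, and $\phi_{\lambda*}\mu=\zeta'+(\phi_{\lambda*}\mu-\zeta')$ is a legitimate commuting decomposition.

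\emph{Generation.} Lemma \ref{lemprinc} only makes $f$ constant on cosets of $\langle\spt\zeta\rangle$. Your ``transfer'' sentence gives no reason why this group should contain the elements of $\spt\mu^n\cap N_{\lambda+1}$ produced by $\mu_2$, or by products of higher-layer elements of $\mu_1$. In general it does not. Take $\Gm=S_3\times\zz$, $N=S_3$, $\nu=\tfrac12(\delta_1+\delta_{-1})$, $\mu_1$ uniform on the $3$-cycles, and $\mu_2$ proportional to $\delta_{(a,1)}+\delta_{(e,-1)}$ with $a$ a transposition. Every stated hypothesis holds, and $(a,1)(e,-1)=(a,0)$ puts a transposition in $\spt\mu^2\cap N$, which therefore generates $S_3$. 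Yet $\langle\spt\zeta\rangle=A_3$, so your step only gives constancy on $A_3$-cosets.

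You therefore have to read the theorem the way Lemma \ref{fccentral} is written: $\spt\mu_1\cap Z^{\lambda+1}_{FC}$ generates $N_{\lambda+1}$ modulo $N_\lambda$ for every $\lambda$. Under that reading you can work with $\mu$ throughout, and the either/or clause becomes superfluous. The conclusion then needs no entropy: $f$ descends to a $\phi_*\mu$-harmonic function on $\Gm/N$, and since $\phi_*\mu$ is a lazy version of $\nu$, that function is $\nu$-harmonic.
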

The proof of Theorem \ref{teofchyper} is identical to that of Theorem \ref{teohyper}, except for the use of Lemma \ref{fccentral} in place of \ref{lemcentre}.

Here is an amusing corollary in the context of \cite{Go-mixing}:
\begin{cor}
Assume $\Gm$ is a countable group with an infinite FC-hypercentre and $\pi: \Gm \to \mathrm{Aut}(\mathsf{H})$ is a Hilbert representation
with finite stabilisers.
Then any non-trivial harmonic cocycle $b \in \srl{H}^1(\Gm,\pi)$ is unbounded.
\end{cor}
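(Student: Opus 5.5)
The plan is to exploit the FC-hypercentre to produce a non-trivial harmonic ``direction'' along which $b$ must be invariant, and then use the finite stabilisers together with the infinitude of the FC-hypercentre to force unboundedness.

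\textbf{Reduction to a fixed vector.} Assume, for contradiction, that $b$ is a non-trivial harmonic cocycle which is bounded. By the Bruhat--Tits centre lemma (or simply by averaging an orbit via a mean), a bounded cocycle is a coboundary: there is $v \in \mathsf{H}$ with $b(\gm) = \pi(\gm)v - v$. A harmonic coboundary is orthogonal to the coboundaries inside the space of harmonic cocycles; more concretely, harmonicity of $b$ means $\sum_s \mu(s) b(\gm s) = b(\gm)$, i.e.\ $\gm \mapsto \pi(\gm)v$ is a bounded $\mu$-harmonic map into $\mathsf{H}$. The measure $\mu$ here is the one implicit in $\srl{H}^1$ of \cite{Go-mixing}; I would take it to satisfy the hypotheses of Theorem~\ref{teofchyper} with $N$ a suitable subgroup of the FC-hypercentre. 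Composing with any $w \in \mathsf{H}$ gives scalar bounded harmonic functions $F_w(\gm) = \langle \pi(\gm)v \mid w\rangle$.

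\textbf{Invariance along the FC-hypercentre.} Apply the transfinite induction in the proof of Theorem~\ref{teohyper}/\ref{teofchyper}, that is, Lemma~\ref{fccentral} at each stage, to each $F_w$. This shows that each $F_w$ is constant on the cosets of $N$. Since this holds for all $w$, we get $\pi(\gm n)v = \pi(\gm)v$ for all $n \in N$. Taking $\gm = e$ gives $\pi(n)v = v$, so $N$ lies in the stabiliser of $v$.

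\textbf{Conclusion and main obstacle.} If $N$ is infinite, this contradicts the assumption that stabilisers are finite unless $v$ is such that the representation has no finite-stabiliser constraint there. The statement says finite stabilisers for all vectors, so in particular $v$ is fixed by an infinite group. Hence $v$ must be trivial, or more carefully $v$ must be $\pi$-invariant modulo the kernel. In either case $b = 0$, which contradicts non-triviality.

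The main obstacle is choosing $N$ infinite, inside the FC-hypercentre, while satisfying the generation hypotheses of Theorem~\ref{teofchyper} for the given measure. My first attempt would be to use an infinite finitely generated subgroup of some $Z_{FC}^{\lambda}(\Gm)$, which is nicely normal when it is a term of the series. Failing that, I would work directly with the whole FC-hypercentre and the ``$\spt\mu^n$ generates'' hypothesis. A secondary point is justifying that a harmonic bounded cocycle is a coboundary whose orbit map is harmonic, which should be routine.
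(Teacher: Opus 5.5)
Your argument goes through, but only once you commit to $N=Z^{\alpha}_{FC}(\Gm)$, the whole FC-hypercentre, with a measure whose support generates $\Gm$ and every term of the FC-central series. This is exactly the paper's choice. Discard your ``first attempt'' for two reasons. An infinite finitely generated subgroup of some $Z^{\lambda}_{FC}(\Gm)$ need not be normal, whereas Lemma~\ref{fccentral} and the quotients $\Gm/N_i$ in the induction require normality. And such a subgroup need not exist at all, since the FC-hypercentre can be infinite and locally finite (e.g.\ $\Gm=\bigoplus_{\nn}\zz/2\zz$). Also, ``finite stabilisers'' concerns non-zero vectors, so $\pi(N)v=v$ with $N$ infinite simply gives $v=0$; the ``modulo the kernel'' alternative is not needed.

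With that fixed, your route differs from the paper's in the endgame. The paper never passes to a coboundary. It applies the hypercentral induction to the scalar functions $\gm\mapsto\langle b(\gm)\mid w\rangle$, so $b$ is constant on $Z^{\alpha}_{FC}(\Gm)$-cosets. Since $b(e)=0$, this gives $b\equiv 0$ on $Z^{\alpha}_{FC}(\Gm)$. It then uses \cite[Lemma 2.11]{Go-mixing} to spread the vanishing to the almost-malnormal hull, which is $\Gm$. For a normal $N$ this is the identity $b(gkg^{-1})=b(g)+\pi(g)b(k)-\pi(gkg^{-1})b(g)$. When $b$ vanishes on $N$, it gives $\pi(gkg^{-1})b(g)=b(g)$ for all $k\in N$, so each $b(g)$ is fixed by the infinite group $N$ and vanishes. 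This uses only the cocycle identity, not that $\pi$ is isometric.

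You instead use the centre lemma to write $b=\pi(\cdot)v-v$. That makes the last step a one-liner, but it needs $\pi$ isometric (or uniformly bounded). Moreover, in the unitary case this first step already finishes the proof with no hypercentral input at all. For symmetric $\mu$, harmonicity of $b=\pi(\cdot)v-v$ reads $\sum_s\mu(s)\pi(s)v=v$. Since $\|\pi(s)v\|=\|v\|$, strict convexity of the Hilbert norm forces $\pi(s)v=v$ for every $s\in\spt\mu$. Hence $v$ is $\Gm$-invariant and $b=0$; equivalently, as you half-observed, a harmonic coboundary is orthogonal to itself. So on your route Theorem~\ref{teofchyper} and the finite-stabiliser hypothesis are redundant. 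The paper's route, which avoids the centre lemma, is the one in which those hypotheses actually do the work.
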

\begin{proof}
There is some symmetric measure $\mu$ whose support generates $\Gm$ and all the $Z^\lambda(\Gm)$.
If $b$ is harmonic and bounded, then it yields a bounded harmonic function on $\Gm$.
This function being constant on $Z^\alpha(\Gm)$ means it is actually trivial as $b(e_\Gm)=0$.
But by \cite[Lemma 2.11]{Go-mixing} this extends to the almost-malnormal hull of $Z^\alpha(\Gm)$ which is $\Gm$.
\end{proof}

\section{Remarks on generating sets}

Though there are many measures $\mu$ which fit in the description of Theorems \ref{teohyper} and \ref{teofchyper}, it is natural to wonder if the choice is important.
This is actually the so-called stability problem: if a group is $P$-Liouville for some symmetric measure $P$ of finite support, is it $Q$-Liouville for all symmetric measures. See Erschler and Frisch's paper \cite{EF} for more details on this topic.

Our remarks here will be mostly motivated by the following observation.
If $P$ and $Q$ are two finitely supported measure which generate $\Gm$, then there exists an $n$ so that $\spt (P^n) \supset \spt Q$.
As such the question seems to reduce to the question whether given a measure $Q$ does the Liouville property passes to some convex combination of $Q$ and $\delta_\gm$ for some $\gm \in \Gm$. Successive such combinations can then be use to get from $Q$ to $P^n$.

Often in this section, the equality $(1-x)^{-1} = \sum_{i\geq 0} x^i$ [for operators of norm $<1$] will be used. There are of course other ways to come to the same conclusions.
\begin{prop}
Assume $c$ is an element of order $n$ and $P$ is a probability measure on $\Gm$.
Assume $\mu_t = tP+(1-t)\delta_c$ for some $t \in ]0,1[$.
Let $\nu_t = \displaystyle P*\sum_{i=0}^{n-1} \tfrac{t(1-t)^i}{1-(1-t)^n} \delta_c^i$.
Then $f$ is $\mu_t$-harmonic if and only if $f$ is $\nu_t$-harmonic. \\
\end{prop}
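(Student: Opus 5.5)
The plan is to work with right-convolution operators on $\ell^\infty(\Gm)$ and to use the inversion formula $(1-x)^{-1}=\sum_{i\ge0}x^i$ in the form allowed by $c^n=e$, namely a finite geometric sum. Write $C$ for the operator $f\mapsto f*\delta_c$ and $P$ for $f\mapsto f*P$. Then $\mu_t$-harmonicity of $f$ reads
\[
f = t\, f*P + (1-t)\, f*\delta_c, \qquad\text{i.e.}\qquad f*\big(\delta_e-(1-t)\delta_c\big) = t\, f*P .
\]
Let $R:=\sum_{i=0}^{n-1}\frac{(1-t)^i}{1-(1-t)^n}\delta_c^i$. The first step is to check that $R$ inverts $\delta_e-(1-t)\delta_c$ in $\ell^1(\Gm)$. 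Telescoping gives $(\delta_e-(1-t)\delta_c)*\sum_{i=0}^{n-1}(1-t)^i\delta_c^i=\delta_e-(1-t)^n\delta_c^n=(1-(1-t)^n)\delta_e$, since $c^n=e$. Both factors are polynomials in $\delta_c$, so they commute and $R$ is a two-sided inverse. Equivalently, one can sum the Neumann series $\sum_{k\ge0}(1-t)^k\delta_c^k$ and group the exponents modulo $n$.

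Second, note that $\nu_t=t\,P*R$ is exactly the measure in the statement. It is a probability measure, since $t\sum_{i=0}^{n-1}\frac{(1-t)^i}{1-(1-t)^n}=1$.

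Third, prove the equivalence. If $f$ is $\mu_t$-harmonic, then $f*(\delta_e-(1-t)\delta_c)=t f*P$. Convolving on the right by $R$ gives $f=t\,f*P*R=f*\nu_t$, using associativity of convolution of an $\ell^\infty$ function with $\ell^1$ measures. Conversely, if $f=f*\nu_t=t\,f*P*R$, convolve on the right by $\delta_e-(1-t)\delta_c$ and use $R*(\delta_e-(1-t)\delta_c)=\delta_e$. This yields $f*(\delta_e-(1-t)\delta_c)=t\,f*P$, which is $\mu_t$-harmonicity. Since both directions are equalities obtained by convolving with mutually inverse measures, no boundedness assumption on $f$ is needed beyond convolution being defined; for bounded $f$ this is automatic.

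The only real point of care is the order of the factors. The statement places $P$ to the left of the powers of $\delta_c$, so I must multiply by $R$ on the right of $f*(\delta_e-(1-t)\delta_c)$, which is the side where the $\delta_c$ factor sits. This gives $f*P*R$ and matches $\nu_t$. Since $P$ and $\delta_c$ need not commute, I would state explicitly that only the commutation of $R$ with $\delta_e-(1-t)\delta_c$ is used, never any commutation with $P$. Associativity of these convolutions is the remaining ingredient; it holds because every measure involved is in $\ell^1$, and $R$ is even finitely supported.
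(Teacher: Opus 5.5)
Your proposal is correct and is essentially the paper's proof. You rewrite $\mu_t$-harmonicity as $f*(\delta_e-(1-t)\delta_c)=t\,f*P$, convolve on the right by the inverse of $\delta_e-(1-t)\delta_c$ (a finite geometric sum because $c^n=e$) to get $f=f*\nu_t$, and convolve back by $\delta_e-(1-t)\delta_c$ for the converse; your telescoping check of the two-sided inverse and your care to keep $P$ on the left only make the paper's Neumann-series step more explicit.
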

As an example if $t = \sfrac{1}{2}$ and $c$ has order 2, then $\nu_{\sfrac{1}{2}} = \tfrac{2}{3} P + \tfrac{1}{3} P \delta_c$.
\begin{proof}
Assume $f$ is $\mu_t$-harmonic.
Then $f*\mu_t = f = f*\delta_e$ can be rearranged as $f*P = f* (\tfrac{1}{t} \delta_e - \tfrac{1-t}{t}\delta_c)$.
Hence, by inverting $\delta_e - (1-t) \delta_c$, and noticing that, because of the finite order of $c$, $\displaystyle \sum_{i\geq 0} [(1-t)\delta_c]^i = \sum_{i=0}^{n-1} \tfrac{(1-t)^i}{1-(1-t)^n} \delta_c^i$, one gets
\[
f* P * ( \sum_{i=0}^{n-1} \tfrac{t(1-t)^i}{1-(1-t)^n} \delta_c^i) = f.
\]
That is $f$ is $\nu_t$-harmonic.

On the other hand assum that $f$ is $\nu_t$-harmonic.
Then $f= f*\nu_t$ so $f*(\delta_e - (1-t)\delta_c) = f*tP$. This can be rearranged to see that $f$ is $\mu_t$-harmonic.
\end{proof}

\begin{rmk}\label{remChe}
In the previous result $\mu_t$ is not symmetric unless $c$ has order 2. But even if $c$ has order 2, $\nu_t$ is not symmetric, unless $\delta_c*P = P*\delta_c$ (but then Lemma \ref{lemprinc} can be applied).
It is not difficult to get the corresponding result for $\mu'_t:= tP + \tfrac{(1-t)}{2} (\delta_c + \delta_{c^{-1}})$.
The expression of the coefficients in $\nu'_t$ is more difficult (involves 
Čebyšëv
polynomials).
But although $\mu'_t$ is symmetric, $\nu'_t$ is not symmetric.
\end{rmk}
\begin{rmk}\label{remSWS}
Also noteworthy is that, if one lets $\rho = \tfrac{1}{n} \sum_{i=0}^{n-1} \delta_c^i$ to be the uniform measure on the group generated by $c$, when $t=0$, $\mu_t$ is very singular (reduces to $\delta_c$), but the limit $\nu_0 := \lim_{t\to 0} \nu_t = P * \rho $ remains an interesting measure.
Furthermore a $\nu_0$-harmonic function is also $\delta_c$-harmonic (since $\rho * \delta_c = \rho$).
But if $f$ is $\delta_c$-harmonic and $\nu_0$-harmonic, then it is harmonic with respect to the measures $\tfrac{1}{n} \sum_{i = 0}^{n-1} ( P + \delta_c^{-i} P \delta_c^i)$ and $\rho*P*\rho$ as well.
These last two measures are symmetric and the latter measure is akin to a ``switch-walk-switch'' generating set.
\end{rmk}
\begin{rmk}
Letting again $\rho = \tfrac{1}{n}\sum_{i=0}^{n-1} \delta_c^i$ to be the uniform measure on the group generated by $c$, note that $ \nu_t * \rho = P * \rho$. Hence if $f$ is $\mu_t$ harmonic, then $ f*P*\rho = f* \rho$.
\end{rmk}

\begin{prop}\label{propS}
Assume $c$ is an element of order n and $P$ is a probability measure on $\Gm$.
Assume $\mu = tP+(1-t)\delta_c$ for some $t \in ]0,1[$.
Let $S_t = (1-t)\sum_{i \geq 0} (tP)^i$
Then $f$ is $\mu_t$-harmonic if and only if $f$ is $(\delta_c*S_t)$-harmonic.
Furthermore $f$ is then $\Big(\prod_{i=1}^{n} \delta_c^{i}S_t\delta_c^{-i} \Big)$-harmonic
\end{prop}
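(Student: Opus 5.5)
The plan is to follow the proof of the preceding proposition: rewrite the harmonicity equation and invert an operator of norm $<1$ by its Neumann series.

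\emph{$S_t$ is a probability measure inverting $\delta_e - tP$.} Since $P$ is a probability measure, $\|(tP)^i\|_{\ell^1} = t^i$. Hence $\sum_{i\geq 0}(tP)^i$ converges in $\ell^1(\Gm)$ and is the inverse of $\delta_e - tP$ in the convolution algebra $\ell^1(\Gm)$. The measure $S_t$ is nonnegative with total mass $(1-t)\sum_i t^i = 1$, and it satisfies
\[
S_t*(\delta_e - tP) = (\delta_e-tP)*S_t = (1-t)\delta_e .
\]
The only mild technical point is that for $f\in\ell^\infty(\Gm)$ and $a,b\in\ell^1(\Gm)$ we have $(f*a)*b = f*(a*b)$. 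This holds because it is true for finitely supported $a,b$, and both sides are continuous in $a,b$ for the $\ell^1$-norm, by Hölder's inequality as in Lemma \ref{meanconv}.

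\emph{The equivalence.} Suppose $f*\mu = f$. Expanding gives $tf*P + (1-t) f*\delta_c = f$, that is,
\[
f*(\delta_e - tP) = (1-t)\, f*\delta_c .
\]
Convolving on the right by $\sum_{i\ge0}(tP)^i$ and using associativity yields $f = f*\delta_c*S_t$, so $f$ is $(\delta_c*S_t)$-harmonic. Conversely, if $f = f*\delta_c*S_t$, convolve on the right by $\delta_e - tP$. Using $S_t*(\delta_e-tP) = (1-t)\delta_e$ this gives $f*(\delta_e-tP) = (1-t)f*\delta_c$, which rearranges to $f*\mu = f$.

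\emph{The product formula.} If $f$ is $(\delta_c*S_t)$-harmonic, it is harmonic for $(\delta_c*S_t)^{*n}$. Inserting $\delta_c^{-i}*\delta_c^{i} = \delta_e$ between consecutive factors makes the product telescope:
\[
(\delta_c S_t)^n = (\delta_c S_t\delta_c^{-1})(\delta_c^{2} S_t \delta_c^{-2})\cdots(\delta_c^{n} S_t\delta_c^{-n})\,\delta_c^{n}.
\]
Since $c$ has order $n$, we have $\delta_c^n=\delta_e$. This gives $f = f*\prod_{i=1}^{n}\delta_c^{i}S_t\delta_c^{-i}$, with the product taken in increasing order of $i$.

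No step is a genuine obstacle. The only point needing care is justifying the manipulations of infinite series acting on $\ell^\infty$, namely the $\ell^1$-convergence and associativity noted in the first step, and that is handled as above.
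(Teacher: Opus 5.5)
Your proof is correct and follows the paper's argument essentially step for step. You rewrite $\mu$-harmonicity as $f*(\delta_e-tP)=(1-t)f*\delta_c$, invert $\delta_e-tP$ by its Neumann series to obtain $f=f*\delta_c*S_t$, recover $\mu$-harmonicity by convolving with $\delta_e-tP$, and telescope $(\delta_c*S_t)^{*n}$ using $\delta_c^n=\delta_e$. Your justification of the $\ell^1$-convergence and of the associativity $(f*a)*b=f*(a*b)$ for $f\in\ell^\infty(\Gm)$ and $a,b\in\ell^1(\Gm)$ makes explicit what the paper leaves implicit.
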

\begin{proof}
The fact that $f$ is $\mu_t$-harmonic can be written as $f*(1-t)\delta_c = f*\delta_e - f*tP$.
Convoluting on the right with  $(\delta_e-tP)^{-1} = \sum_{i \geq 0} (tP)^i$, one gets $f*\delta_c *S_t = f$.
This is equivalent to $\delta_c*S_t$-harmonicity.
Starting from $f = f*\delta_c*S_t$, use convolution by $(\delta_e - tP)$ to get back to $\mu$-harmonicity.

Furthermore, $f$ is then automatically $(\delta_c*S_t)^n$-harmonic.
But $\delta_c*S_t*\delta_c*S_t*\delta_c*S_t*\cdots *\delta_c*S_t$ can be rewritten as
$\delta_c*S_t*\delta_c^{-1} * \delta_c^2*S_t*\delta_c^{-2} * \delta_c^3*S_t*\delta_c^{-3} \cdots *\delta_c^n*S_t$. The last $\delta_c^{-n}$ can be added since $\delta_c^n = \delta_c^{-n} = \delta_e$.
\end{proof}
\begin{rmk}
The interesting property of the measure $\Big(\prod_{i=1}^{n} \delta_c^{i}S_t\delta_c^{-i} \Big)$ is that if $P$ is Liouville, then so are all the $\delta_c^i * S_t * \delta_c^{-i}$.
As such one passes from a problem on sums of measures to product of measures.

To see this, first note that, assuming $P$ is Liouville, $\delta_c^i * P * \delta_c^{-i}$ is also Liouville, since conjugation is a group automorphism.
Then note that if $f$ is $S_t$-harmonic, then, convoluting $f = f*S_t$ by $*(\delta_e - tP)$ yields $f*(\delta_e - tP) = f(1-t)$. Rearranging one sees that $t f = t f*P$, \ie $f$ is $P$-harmonic.

The converse, namely that any $P$-harmonic function is $S_t$ harmonic, is straightforward.
\end{rmk}
Note again the measures in Proposition \ref{propS} are not symmetric (except $\mu_t$ when $c$ has order 2).
The first part statement for the symmetric measure $\mu'_t = tP + \tfrac{(1-t)}{2} (\delta_c + \delta_{c^{-1}})$ is easily adapted, but the product of conjugates loses its interest:
it becomes a sum of products.


To finish on an amusing note,  if $P$ is ``lazy enough'', \ie $P = a\delta_e + (1-a) Q$ for some measure $Q$ and some $a \in ]\sfrac{1}{2};1[$,
then it is possible to go back in time: $P^{-1}$ is the sum $\tfrac{1}{a} \sum_{i \geq 0} \Big( \tfrac{a-1}{a} Q \Big)^i$.
However since $\tfrac{a-1}{a}$ is negative, this is not a (positive) measure.
The sum can be split in an obviously positive part (whose $\ell^1$-norm is $\tfrac{a}{2a-1}$) and a negative part (whose $\ell^1$-norm is $\frac{1-a}{2a-1}$).
Both will probably never cancel out and the norm of $f \mapsto f*P$ is probably always larger than one.

%
%
%
%

\end{document}